\newtheorem{teo}{Theorem}
\newtheorem{lem}{Lemma}
 \title{{\bf   Cauchy Problem for an abstract Evolution Equation of  fractional order      }}
\author{Maksim \,V.~Kukushkin   \\ \\
 \small  \textit{Moscow State University of Civil Engineering, 129337,  Moscow, Russia}\\
 \textit{\small\textit{kukushkinmv@rambler.ru}} }
\date{}
\begin{document}

\maketitle

\begin{abstract}
In this paper, we define an operator function as a series of operators corresponding to the Taylor series representing the function of the complex variable. In previous papers, we considered the case when a function has a decomposition in the Laurent series with the infinite principal part and finite regular part. Our  central challenge  is to improve this result having  considered as a regular part an entire function satisfying the special condition of the growth regularity. As an application we consider an opportunity to broaden the conditions imposed upon the second term not containing the time variable of the evolution equation in the abstract Hilbert space.

\end{abstract}
\begin{small}\textbf{Keywords:}
 Evolution equations;  Operator function;  Fractional differential equations;  Abel-Lidskii basis property;   Schatten-von Neumann  class.   \\\\
{\textbf{MSC} 47B28; 47A10; 47B12; 47B10;  34K30; 58D25.}
\end{small}

\section{Introduction}

The urbanization of the sea coast and the active use of shelf resources have led to an increase in accidents of man-made origin and their negative impact on the environment. In these conditions, it becomes critically important to control natural catastrophic phenomena and assess their consequences in order to minimize possible human and material losses. This was the reason for the development in recent years of acoustic measurement methods and long-term monitoring of the parameters of the aquatic environment in shallow waters. Hydroacoustic complexes operating on the basis of the proposed methods provide remote measurement of the parameters of the aquatic environment in bays, straits and inland reservoirs, allowing early detection of sources of natural and man-made threats.
Ferroelectrics are a promising class of polar dielectrics and  the study of their  nonequilibrium dynamics, phase transitions and domain kinetics is of key importance in acoustics. In  the paper \cite{L. Mor}, the description of the process of switching the polarization of ferroelectrics is implemented by modeling a fractal system. Since the polarization switching process is the result of the formation of self-similar structures, the domain configurations of many ferroelectrics are characterized by a self-similar structure, and electrical responses are characterized by fractal patterns. The manifestation of fractal properties is due to the complex mechanisms of domain boundary movement, the anisotropy properties of real crystals, the stochastic nature of the nucleation process, and the presence of memory effects. The field of application of the results of fractal system modeling is focused on the description of the process of switching the polarization of ferroelectrics. The main mathematical object of research is the Cauchy problem for the evolution  equation with a fractional Riemann-Liouville   derivative in the first term. The proposed solution methods are numerical, based on a finite difference method.

   At the same time the method invented in this paper allows us to solve such problems analytically what is undoubtedly a great  advantage.
   For instance, we have an opportunity to prove      the existence and uniqueness theorems  for evolution  equations   with the second term represented by an operator function of a differential operator  with a fractional derivative in  final terms. Herewith,  the well-known operators such as  the Riemann-Liouville  fractional differential operator, the Kipriyanov operator,  the Riesz potential, the difference operator  (more  detailed \cite{firstab_lit:1kipriyanov1960},\cite{kukushkin2021a},\cite{firstab_lit:samko1987}), the artificially constructed normal operator  are involved \cite{firstab_lit:2kukushkin2022}. Here, we represent  a list of papers  dealing  with the problems  which can be investigated  by the obtained in this paper abstract method \cite{firstab_lit:Andronova2017,firstab_lit:Mamchuev2017a,firstab_lit:Mamchuev2017,firstab_lit:Pskhu}.
The physical significance of the problem  is based upon the broad field of applications, here we referee the most  valuable example considered above \cite{L. Mor} in order to show plainly  the significance. The main idea of the results connected with the basis property in the Abell-Lidskii sense  \cite{firstab_lit:1kukushkin2021} allow us to solve  many   problems \cite{firstab_lit:2kukushkin2022} in the theory of evolution  equations and in this way obtain remarkable  applications. The central aim  of this paper is devoted to  an approach  allowing us to principally broaden conditions imposed upon the second term   of the evolution equation in the abstract Hilbert space. In this way we can obtain abstract results covering many applied problems to say nothing on the far-reaching   generalizations.
We plan to implement the idea  having involved a notion of  an operator function and our final goal is  an existence and uniqueness theorem for an abstract evolution equation with an operator function in the second term not containing the time variable, where the derivative in the first term is supposed to be of fractional order. The  peculiar result that is worth highlighted  is the obtained analytic formula for the solution.   We should remind that involving a notion of the operator function, we broaden a great deal  an operator  class corresponding to the second term.   Thus, we can state   that the main issue of the paper is closely connected  with root vectors expansion of the non-selfadjoint unbounded  operator.
We should note that    the question regarding decompositions of the non-selfadjoint operator on the series of eigenvectors  (root vectors)   is rather complicated and  deserves to be considered itself. For this purpose, we need to involve some generalized notions of the series convergence, we mean   Abel-Lidskii  sense  of the series convergence  \cite{firstab_lit:1Lidskii},\cite{firstab_lit:2Agranovich1994},\cite{firstab_lit:1Gohberg1965}. Here, we should referee remarkable papers and monographs in the framework of the theory   that allows us to obtain such exotic results \cite{firstab_lit:1Lidskii},\cite{firstab_lit:2Agranovich1994}, \cite{firstab_lit(arXiv non-self)kukushkin2018}, \cite{kukushkin2019}, \cite{kukushkin2021a}, \cite{firstab_lit:1kukushkin2021}, \cite{firstab_lit:2kukushkin2022}, \cite{firstab_lit(axi2022)}, \cite{firstab_lit:Markus Matsaev}.
To be exeat, a peculiar interest appears in the case when   a senior term of an operator is not selfadjoint  for there is a number of papers \cite{firstab_lit:1Katsnelson},\cite{firstab_lit:1Krein},\cite{firstab_lit:Markus Matsaev},\cite{firstab_lit:2Markus},\cite{firstab_lit:Motovilov},
\cite{firstab_lit:Shkalikov A.} devoted to the perturbed selfadjoint operators.
It is remarkable that in this regard, we can justify  the methods used in the paper
   \cite{firstab_lit(arXiv non-self)kukushkin2018} for they have a natural mathematical origin that appears brightly  while we are considering abstract constructions expressed  in terms of the semigroup theory   \cite{kukushkin2021a}.

\section{Preliminaries}

\noindent{\bf   The growth scale}\\

To characterize the growth of an entire function $f(z),$ we introduce the functions
$$
M_{f}(r)=\max\limits_{|z|=r}|f(z)|,\,m_{f}(r)=\min\limits_{|z|=r}|f(z)|.
$$
An entire function $f(z)$ is said to be a function of finite order if there exists a positive constant $k$ such that the inequality
 $$
 M_{f}(r)<e^{r^{k}}
 $$
 is valid for all sufficiently large values of $r.$ The greatest lower bound of such numbers $k$ is called the {\it order} of the entire function $f(z).$\\

It follows from the definition that if $\varrho$ is the order of the entire function $f(z),$ and if $\varepsilon$ is an arbitrary positive number, then
\begin{equation}\label{1j}
e^{r^{\varrho-\varepsilon}}<M_{f}(r)<e^{r^{\varrho+\varepsilon}},
\end{equation}
where the inequality on the right-hand side is satisfied for all sufficiently large values of $r,$ and the inequality on the left-hand side holds for some sequence $\{r_{n}\}$ of values of $r,$ tending to infinity. It is easy to verify that the previous condition is equivalent to the equation
$$
\varrho=\overline{\lim\limits_{r\rightarrow \infty}}\,\frac{\ln\ln M_{f}(r)}{\ln r},
$$
which is taken as the definition of the order of the function. Further, an inequality that holds for all sufficiently large values of $r$ will be called an {\it asymptotic inequality.}

For functions of the given order a more precise characterization of the growth is given by the type of the function. By the {\it type} $\sigma$ of the entire  function $f(z)$ of the order $\varrho$ we mean the greatest lower bound of positive numbers $A$ for which the following relation  holds asymptotically
 $$
 M_{f}(r)<e^{Ar^{\varrho}}.
 $$
Analogously to the definition of the order it is easy to verify that the type $\sigma$ of a function $f(z)$ of order $\varrho$ is given by the relation
$$
\sigma=\overline{\lim\limits_{r\rightarrow\infty}}\frac{\ln M_{f}(r)}{r^{\varrho}}.
$$

\noindent{\bf   Convergence exponent}\\

Here,    following  the monograph \cite{firstab_lit:Eb. Levin}, we introduce some notions and facts of the entire function theory. In this subsection, we   use the following notations
$$
G(z,p):=(1-z)e^{z+\frac{z^{2}}{2}+...+\frac{z^{p}}{p}},\,p\in \mathbb{N},\,G(z,0):=(1-z).
$$
Consider   an entire function  that has zeros satisfying the following relation for some   $\lambda>0$
\begin{equation}\label{1}
 \sum\limits_{n=1}^{\infty}\frac{1}{|a_{n}|^{\lambda}}<\infty.
\end{equation}
In this case, we denote by $p$ the smallest integer number for which the following condition holds
\begin{equation}\label{2}
\sum\limits_{n=1}^{\infty}\frac{1}{|a_{n}|^{p+1}}<\infty .
\end{equation}
 It is clear that $0\leq p<\lambda.$ It is proved (see \cite{firstab_lit:Eb. Levin}) that under the assumption \eqref{1}  the   infinite product
 \begin{equation}\label{3}
 \prod\limits_{n=1}^{\infty} G\left(\frac{z}{a_{n}},p\right)
\end{equation}
 is uniformly convergent, we will call it a canonical product and call $p$ the genus of the canonical product.
By the   {\it convergence exponent} $\rho$ of the sequence
$
\{a_{n}\}_{1}^{\infty}\subset \mathbb{C},\,a_{n}\neq 0,\,a_{n}\rightarrow \infty
$
 we mean the greatest lower bound for such numbers $\lambda$ that the   series \eqref{1} converges.
 Note that if $\lambda$ equals to a convergence  exponent then series \eqref{1} may or  may not be convergent. For instance, the sequences $a_{n}= 1/n^{\lambda}$ and $1/(n\ln^{2} n)^{\lambda}$ have the same convergence exponent $\lambda=1,$ but in the first case the series \eqref{1} is divergent when $\lambda=1$ while in the second one it is convergent. In this paper, we have a special interest regarding the first case. Consider the following obvious relation between the convergence exponent $\rho$ and the genus $p$ of the corresponding canonical product $p \leq\rho\leq p+1.$ It is clear that if $\rho=p,$   then    the series  \eqref{1} diverges for $\lambda=\rho,$ while $\rho=p+1$ means that the series converges  (in accordance with the definition of $p$). In the monograph \cite{firstab_lit:Eb. Levin},   a more precise characteristic of the density of the sequence $\{a_{n}\}_{1}^{\infty}$ is considered than the convergence exponent. Thus, there is defined  a so-called counting function $n(r)$ equals to a number of points of the sequence in the circle $|z|<r.$ By upper density of the sequence, we call a number
 $$
 \Delta=\overline{\lim\limits_{r\rightarrow\infty}} n(r)/r^{\rho}.
 $$
 If a limit exists in the ordinary sense (not in the  sense of the upper limit),  then $\Delta$ is called the density. Note that it is proved in Lemma 1
  \cite{firstab_lit:Eb. Levin} that
 $$
   \lim\limits_{r\rightarrow\infty}  n(r)/r^{\rho+\varepsilon}\rightarrow 0,\,\varepsilon>0.
 $$
Bellow, we refer to the Theorem 13 \cite{firstab_lit:Eb. Levin} (Chapter I, $\S$ 10) that gives us a representation of the entire function of the finite order. To avoid the any sort of inconvenient form of writing, we will also  call by a root a zero of the entire function.
\begin{teo}\label{T1a} The entire function $f$ of the finite order $\varrho$ has the following representation
$$
f(z)=z^{m}e^{P(z)}\prod\limits_{n=1}^{\omega}G\left(\frac{z}{a_{n}};p\right),\;\omega\leq \infty,
$$
where $a_{n}$ are non-zero roots of the entire function, $p\leq\varrho,\;P(z)$ is a polynomial, $\mathrm{deg}\, P(z)\leq \varrho,\;m$ is a multiplicity of the zero root.
\end{teo}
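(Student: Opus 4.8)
The assertion is the Hadamard factorization theorem, and the plan is to follow the classical three–part route: bound the zeros by the growth, build the canonical product, and then identify the remaining zero-free factor as $e^{P(z)}$ with $P$ a polynomial of controlled degree. If $f$ has only finitely many non-zero roots the product \eqref{3} is finite and the argument below simplifies considerably, so we may and do assume $\omega=\infty$.

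\emph{Step 1: zeros versus growth.} I would begin with Jensen's formula for $f$ on $|z|=r$, which bounds $\int_{0}^{2r} n(t)\,t^{-1}dt$ from above by $\ln M_{f}(2r)+O(1)$; since $n(r)\ln 2\leq\int_{r}^{2r}n(t)\,t^{-1}dt$, feeding in the order estimate $M_{f}(r)<e^{r^{\varrho+\varepsilon}}$ recorded in \eqref{1j} yields $n(r)=O(r^{\varrho+\varepsilon})$ for every $\varepsilon>0$. Integration by parts (Abel summation) then gives $\sum_{n}|a_{n}|^{-(\varrho+\varepsilon)}<\infty$, so the convergence exponent obeys $\rho\leq\varrho$; since the genus satisfies $p\leq\rho$, we obtain $p\leq\varrho$. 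Moreover condition \eqref{1} holds with $\lambda=\varrho+\varepsilon$, so that the canonical product \eqref{3} is legitimately defined.

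\emph{Step 2: the canonical product and the zero-free factor.} Put $\Pi(z):=\prod_{n=1}^{\infty}G(z/a_{n};p)$. I would next establish Borel's bound that $\Pi$ is entire of order at most $\varrho$: splitting the factors according to $|a_{n}|\leq 2|z|$ versus $|a_{n}|>2|z|$ and using the elementary estimates $\ln|G(w,p)|\leq C|w|^{p+1}$ for $|w|\leq 1/2$ and $\ln|G(w,p)|\leq C|w|^{p}$ for $|w|\geq 1/2$, together with $n(r)=O(r^{\varrho+\varepsilon})$, gives $M_{\Pi}(r)\leq e^{Cr^{\varrho+\varepsilon}}$. Consequently $g(z):=f(z)\,z^{-m}\,\Pi(z)^{-1}$ is entire, because the zeros of $f$ at the $a_{n}$ cancel exactly against those of $\Pi$ while $z^{m}$ absorbs the zero of multiplicity $m$ at the origin; and $g$ has no zeros whatsoever, whence $g=e^{P}$ for some entire function $P$.

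\emph{Step 3: $P$ is a polynomial of degree $\leq\varrho$ --- the crux.} This is the step I expect to be the main obstacle, since it needs a two-sided control of $\Pi$. I would pick a sequence of radii $r_{k}\to\infty$ avoiding the circles $|z|=|a_{n}|$ in a quantitative way --- concretely, radii for which $|1-z/a_{n}|\geq r_{k}^{-q}$ simultaneously for all $n$, with $q$ fixed; such radii exist because the total length of the ``bad'' annuli is controlled by $\sum|a_{n}|^{-(p+1)}<\infty$ from \eqref{2}, via a Boutroux--Cartan type argument. On $|z|=r_{k}$ one then gets the minimum-modulus lower bound $|\Pi(z)|\geq e^{-Cr_{k}^{\varrho+\varepsilon}}$, which combined with $M_{f}(r_{k})\leq e^{r_{k}^{\varrho+\varepsilon}}$ gives $\mathrm{Re}\,P(z)=\ln|g(z)|\leq Cr_{k}^{\varrho+\varepsilon}$ there. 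The Borel--Carath\'eodory theorem upgrades this one-sided bound on $\mathrm{Re}\,P$ to $\max_{|z|=r_{k}/2}|P(z)|\leq C'r_{k}^{\varrho+\varepsilon}$, and Cauchy's inequality for the Taylor coefficients $c_{j}$ of $P$ yields $|c_{j}|\leq C'r_{k}^{\varrho+\varepsilon}(r_{k}/2)^{-j}\to 0$ whenever $j>\varrho+\varepsilon$; letting $\varepsilon\downarrow 0$ forces $c_{j}=0$ for all $j>\varrho$. Assembling Steps 1--3, $f(z)=z^{m}e^{P(z)}\prod_{n=1}^{\infty}G(z/a_{n};p)$ with $p\leq\varrho$, $\deg P\leq\varrho$, and $m$ the multiplicity of the zero root, which is the claimed representation. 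The genuinely delicate points are the minimum-modulus estimate for the canonical product on suitably chosen circles and the passage from a bound on $\mathrm{Re}\,P$ to a bound on $P$ itself; everything else is bookkeeping with the growth inequalities already at hand in \eqref{1j}.
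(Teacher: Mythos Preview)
Your proof sketch is the standard classical route to the Hadamard factorization theorem and is correct as outlined. However, there is nothing to compare it against: the paper does not prove Theorem~\ref{T1a} at all, but simply quotes it verbatim from Levin's monograph (Theorem~13, Chapter~I, \S 10 of \cite{firstab_lit:Eb. Levin}) as a preliminary fact, with no argument supplied. So your write-up is not a different approach to the paper's proof --- it is a proof where the paper gives none.
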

The infinite product represented  in   Theorem \ref{T1a} is called  a  canonical product of the entire function.\\

\noindent{\bf Proximate order and angular density of zeros}\\

The scale of the growth admits   further clarifications. As a simplest generalization E.L.  Lindel\"{o}f made a comparison $M_{f}(r)$ with the functions of the type
$$
r^{\varrho}\ln^{\alpha_{1}}r \ln^{\alpha_{2}}_{2}r...\ln^{\alpha_{n}}_{n}r,
$$
where $\ln _{j}r=\ln\ln_{j-1}r,\;\alpha_{j}\in \mathbb{R},\,j=1,2,...,n.$ In order to make the further generalization, it is natural (see \cite{firstab_lit:Eb. Levin}) to define a class of the functions $L(r)$ having  {\it low growth}  and  compare $\ln M_{f}(r)$  with $r^{\varrho}L(r).$ Following the idea, G. Valiron introduced a notion of proximate order of the growth of the entire function $f,$ in accordance with which a function $\varrho(r),$  satisfying the following conditions
$$
\lim\limits_{r\rightarrow \infty}\varrho(r)=\varrho;\,\lim\limits_{r\rightarrow \infty}r\varrho'(r)\ln r=0,
$$
is said to be proximate order if the following relation holds
$$
\sigma_{f}=\overline{\lim \limits_{r\rightarrow \infty}} \,\frac{\ln M_{f}(r)}{r^{\varrho(r)}},\,0<\sigma_{f}<\infty.
$$
In this case the value $\sigma_{f}$ is said to be a type of the function $f$ under the proximate order $\varrho(r).$\\

To guaranty some technical results we need to consider a class of  entire functions  whose zero distributions have a certain type of regularity. We follow the monograph  \cite{firstab_lit:Eb. Levin} where the regularity of the distribution of the zeros is characterized by a certain type of density of the set of zeros.

We will say that the set $\Omega$ of the complex plane has an {\it angular density of index} $$\;\xi(r)\rightarrow\xi,\,r\rightarrow\infty,$$
if for an arbitrary set of values $\phi$ and $\psi\;(0<\phi<\psi\leq 2\pi),$ maybe except of   denumerable sets, there exists the limit
\begin{equation}
\Delta(\phi,\psi)=\lim\limits_{r\rightarrow \infty}\frac{n(r,\phi,\psi)}{r^{\xi(r)}},
\end{equation}
where $n(r,\phi,\psi)$ is the number of points of the set $\Omega$ within the sector $|z|\leq r,\;\phi< \mathrm{arg} z<\psi.$ The quantity $\Delta(\phi,\psi)$ will be called the angular density of the set  $\Omega$ within the sector $\phi< \mathrm{arg} z<\psi.$   For a fixed $\phi,$ the relation
$$
\Delta(\psi)-\Delta(\phi)=\Delta(\phi,\psi)
$$
determines, within the additive constant, a nondecreasing function $\Delta(\psi).$  This function is defined for all values of $\psi,$ may be except for a denumerable set of values. It is shown in the monograph \cite[p. 89]{firstab_lit:Eb. Levin}  that the exceptional values of $\phi$ and $\psi$ for which there does  not exist an angular density must be the points of discontinuity of the function $\Delta(\psi).$  A set will be said to be {\it regularly  distributed} relative to $\xi(r)$  if it has an angular density $\xi(r)$ with $\xi$ non-integer.

The asymptotic equalities which we will establish are related to the order of growth. By the asymptotic equation
$$
f(r)\approx \varphi(r)
$$
we will mean the fulfilment of the following condition
$$
[f(r)-\varphi(r)]/r^{\varrho(r)}\rightarrow0,\,r\rightarrow\infty.
$$

Consider the following conditions allowing us to solve technical problems related to estimation of contour integrals.    \\

\noindent $(\mathrm{I})$ There exists a value $d>0$ such that circles of  radii
$$
r_{n}=d|a_{n}|^{1-\frac{\varrho(|a_{n}|)}{2}}
$$
with the centers situated at the points $a_{n}$ do not intersect each other, where $a_{n}.$\\

\noindent $(\mathrm{II})$ The points $a_{n}$ lie inside angles with a common vertex at the origin but with no other points in common, which are such that if one arranges the points of the set $\{a_{n}\}$ within any one of these angles in the order of increasing moduli, then for all points which lie inside the same angle the following relation holds
$$
|a_{n+1}|-|a_{n}|>d|a_{n}|^{1-\varrho(|a_{n}|)},\,d>0.
$$
 The circles $|z-a_{n}|\leq r_{n}$ in the first case, and $|z-a_{n}|\leq d |a_{n}|^{1-\varrho(|a_{n}|)}$ in the second case, will be called the exceptional  circles.\\

The following theorem is a central point of the study. Bellow for the reader convenience, we present   the  Theorem 5 \cite{firstab_lit:Eb. Levin} (Chapter II, $\S$ 1) in the slightly changed form.

\begin{teo}\label{T2a} Assume that the entire  function $f$  of the proximate order $\varrho(r),$    where $\varrho$ is not integer, is represented by its canonical product i.e.
$$
f(z)= \prod\limits_{n=1}^{\infty}G\left(\frac{z}{a_{n}};p\right),
$$
the set of zeros is regularly distributed relative to the proximate order and  satisfies one of the conditions $(\mathrm{I})$ or $(\mathrm{II}).$    Then outside of the exceptional set of circulus   the entire  function  satisfies the following  asymptotical inequality
$$
\ln |f(re^{i\psi})|\approx H(\psi)r^{\varrho(r)},
$$
where
$$
H(\psi):=\frac{\pi}{\sin \pi \varrho}\int\limits_{\psi-2 \pi}^{\psi}   \cos \varrho (\psi-\varphi-\pi)  d\Delta(\varphi).
$$
\end{teo}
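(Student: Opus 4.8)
The plan is to follow the scheme of the monograph \cite{firstab_lit:Eb. Levin} (Chapter II): represent $\ln|f(re^{i\psi})|$ as a Stieltjes integral against the zero–counting measure of $f$, use the regular distribution of the zeros to replace that measure asymptotically by $d\bigl(\Delta(\varphi)\,t^{\varrho(t)}\bigr)$, and thereby reduce everything to the evaluation of a single radial kernel integral. Concretely, from the canonical product one writes
$$
\ln|f(re^{i\psi})|=\sum_{n=1}^{\infty}\ln\Bigl|G\Bigl(\tfrac{re^{i\psi}}{a_{n}};p\Bigr)\Bigr|
=\int_{0}^{\infty}\!\!\int_{0}^{2\pi}\ln\Bigl|G\Bigl(\tfrac{re^{i\psi}}{te^{i\varphi}};p\Bigr)\Bigr|\,dn(t,\varphi),
$$
where $n(t,\varphi)$ counts the zeros in the sector $\{|z|\le t,\ \arg z<\varphi\}$. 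Using the hypothesis that the zero set is regularly distributed relative to $\varrho(r)$, the estimate $n(r)/r^{\varrho+\varepsilon}\to0$ (Lemma 1 of \cite{firstab_lit:Eb. Levin}) and the uniform convergence of \eqref{3}, one justifies, after a truncation argument, the replacement $dn(t,\varphi)\approx d\Delta(\varphi)\,d\bigl(t^{\varrho(t)}\bigr)$ inside the integral.

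I would then perform the radial integration for a fixed direction. Writing $\theta=\psi-\varphi$, substituting $t=ru$ and invoking the defining properties of a proximate order ($\varrho(r)\to\varrho$ and $r\varrho'(r)\ln r\to0$, which force $t^{\varrho(t)}\sim r^{\varrho(r)}u^{\varrho}$ uniformly on compact $u$-sets), one obtains
$$
\int_{0}^{\infty}\ln\Bigl|G\Bigl(\tfrac{re^{i\theta}}{t};p\Bigr)\Bigr|\,d\bigl(t^{\varrho(t)}\bigr)
\;\approx\;r^{\varrho(r)}\int_{0}^{\infty}\ln\Bigl|G\Bigl(\tfrac{e^{i\theta}}{u};p\Bigr)\Bigr|\,d\bigl(u^{\varrho}\bigr).
$$
The last integral is classical: for $0<\theta<2\pi$ and non-integer $\varrho$ it produces exactly the kernel $\dfrac{\pi}{\sin\pi\varrho}\cos\varrho(\theta-\pi)$, the evaluation going through the explicit formula for $\ln|G(w;p)|$ and a contour-integration / integration-by-parts argument; here the hypothesis ``$\varrho$ not integer'' is indispensable, both for the factor $1/\sin\pi\varrho$ and for convergence at $u\to0$ and $u\to\infty$, the genus $p$ being chosen precisely so that $p<\varrho<p+1$. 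Substituting back $\theta=\psi-\varphi$ and integrating against $d\Delta(\varphi)$ over the interval of length $2\pi$ on which $\psi-\varphi\in(0,2\pi)$, namely $\varphi\in(\psi-2\pi,\psi)$ (which is what dictates the limits in $H(\psi)$), gives
$$
\ln|f(re^{i\psi})|\;\approx\;r^{\varrho(r)}\int_{\psi-2\pi}^{\psi}\frac{\pi}{\sin\pi\varrho}\,\cos\varrho(\psi-\varphi-\pi)\,d\Delta(\varphi)=H(\psi)\,r^{\varrho(r)}.
$$

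The hard part — and the reason the exceptional circles enter — is the ``critical annulus'' $t\asymp r$, $\varphi\approx\psi$, where $re^{i\psi}/a_{n}$ is close to $1$ and the individual terms $\ln|G(\cdot;p)|$ are large (indeed singular at the zeros themselves). I would split the sum into the ranges $|a_{n}|\le r/2$, $r/2<|a_{n}|<2r$ and $|a_{n}|\ge 2r$: the two outer ranges are handled by the density estimates and a dominated-convergence argument after the substitution $t=ru$, while the middle range must be bounded crudely, term by term, using the separation of the $a_{n}$ guaranteed by condition $(\mathrm{I})$ or $(\mathrm{II})$ together with the bound on the number of zeros in the annulus coming from the regular distribution; keeping $re^{i\psi}$ outside the exceptional circles is exactly what makes this middle contribution $o(r^{\varrho(r)})$. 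Assembling the three pieces and letting $r\to\infty$ yields the asserted asymptotic equality. The remaining bookkeeping is routine and is carried out in \cite{firstab_lit:Eb. Levin} (Chapter II, \S\,1), to which I would refer for the technical estimates.
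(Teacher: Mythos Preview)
The paper does not actually prove this theorem: it is quoted verbatim (``in slightly changed form'') as Theorem~5 of \cite{firstab_lit:Eb. Levin}, Chapter~II, \S1, with no argument supplied. Your sketch is therefore more, not less, than what the paper offers; it follows exactly the Levin scheme the paper invokes, and your closing deferral to \cite{firstab_lit:Eb. Levin} for the technical estimates is in fact precisely how the paper handles the result in its entirety.
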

The following lemma gives us a key for the technical part of being constructed theory. Although it does not contain implications of any subtle sort it is worth being presented in the expanded form for the reader convenience.
\begin{lem}\label{L1a}
Assume that $ \varrho\in (0,1/2]$ then the  function $H(\psi)$ is positive if $ \psi\in(-\pi,\pi).$
\end{lem}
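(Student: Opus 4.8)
The plan is to read off the sign of $H(\psi)$ directly from the integral representation supplied by Theorem \ref{T2a}, using both the explicit prefactor and the monotonicity of $\Delta$. First I would dispose of the constant in front of the integral: since $\varrho\in(0,1/2]$ we have $\pi\varrho\in(0,\pi/2]$, hence $\sin\pi\varrho\in(0,1]$ and therefore $\pi/\sin\pi\varrho\geq\pi>0$. Consequently the sign of $H(\psi)$ coincides with the sign of
$$
J(\psi):=\int\limits_{\psi-2\pi}^{\psi}\cos\varrho(\psi-\varphi-\pi)\,d\Delta(\varphi),
$$
and it remains to prove $J(\psi)>0$ for $\psi\in(-\pi,\pi).$

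The second step is an elementary inspection of the integrand. As $\varphi$ runs over the integration segment $[\psi-2\pi,\psi],$ the quantity $t:=\psi-\varphi-\pi$ runs over the symmetric segment $[-\pi,\pi],$ so that $\varrho t$ stays in $[-\varrho\pi,\varrho\pi]\subseteq[-\pi/2,\pi/2]$ by the hypothesis $\varrho\leq1/2.$ On $[-\pi/2,\pi/2]$ the cosine is nonnegative, whence $\cos\varrho(\psi-\varphi-\pi)\geq0$ for every admissible $\varphi.$ Recalling from the Preliminaries that $\Delta(\varphi)$ is a nondecreasing function, so that $d\Delta$ is a nonnegative Stieltjes measure, we immediately obtain $J(\psi)\geq0,$ i.e. $H(\psi)\geq0.$

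It remains to upgrade this to strict positivity, and this is where the only real care is needed. When $\varrho<1/2$ the argument $\varrho t$ lies in the \emph{closed} subinterval $[-\varrho\pi,\varrho\pi]$ of $(-\pi/2,\pi/2),$ so $\cos\varrho(\psi-\varphi-\pi)\geq\cos\varrho\pi>0$ uniformly on $[\psi-2\pi,\psi],$ and hence
$$
H(\psi)\geq\frac{\pi\cos\varrho\pi}{\sin\varrho\pi}\,\bigl(\Delta(\psi)-\Delta(\psi-2\pi)\bigr)=\pi\cot(\varrho\pi)\,\Delta>0,
$$
since the total angular density $\Delta$ of the zero set of the (genuinely infinite, non‑integer order) canonical product is strictly positive — this is forced by $0<\sigma_{f}<\infty$ in the definition of the proximate order. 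The boundary case $\varrho=1/2$ is the genuine obstacle: there $\cos\tfrac12(\psi-\varphi-\pi)$ is still strictly positive on the \emph{open} interval $(\psi-2\pi,\psi)$ but vanishes at its two endpoints, which correspond to one and the same direction $\arg z\equiv\psi.$ One therefore has to observe that the positive mass $d\Delta$ cannot be carried entirely by that single direction — equivalently, the nondecreasing function $\Delta$ is not constant on $(\psi-2\pi,\psi)$ — and from this the strict inequality $J(\psi)>0$ follows. I expect precisely this last point, controlling the vanishing of the integrand at the endpoints in the limiting case $\varrho=1/2,$ to be the main technical nuance; everything else reduces to the sign of $\sin\pi\varrho,$ the sign of $\cos$ on $[-\pi/2,\pi/2],$ and the monotonicity of $\Delta.$
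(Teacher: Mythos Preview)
Your argument is correct and follows essentially the same line as the paper's proof: both reduce to showing that the integrand $\cos\varrho(\psi-\varphi-\pi)$ is nonnegative on the interval of integration and then invoke the monotonicity of $\Delta.$ Your parametrisation is in fact more direct --- you simply let $t=\psi-\varphi-\pi$ run over $[-\pi,\pi]$ and use $\varrho\leq 1/2$, whereas the paper first rewrites the integral over $[0,2\pi]$ via the identities $\cos\varrho(\psi-\varphi-\pi)=\cos\varrho(|\psi-\varphi|-\pi)$ and $2\pi$-periodicity, and then checks the range of the argument on three subintervals.

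The one substantive difference is that you go further than the paper does: the paper's proof stops at nonnegativity of the integrand and monotonicity of $\Delta$, which by itself yields only $H(\psi)\geq0$, while you explicitly argue strict positivity, separating the case $\varrho<1/2$ (uniform lower bound $\cos\varrho\pi>0$) from the borderline case $\varrho=1/2$ (the integrand vanishes only at the endpoints, and the total angular mass cannot be concentrated there). Since the lemma is later used with a strict inequality $H(\theta_{0})>0$, your additional care is well placed.
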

\begin{proof}
Taking  into account the facts $\cos \varrho (\psi-\varphi-\pi)=\cos \varrho (|\psi-\varphi|-\pi),\,\psi-2\pi<\varphi<\psi,\;\;\cos \varrho (|\psi-\varphi|-\pi)=\cos \varrho (|\psi-(\varphi+2\pi)|-\pi),$ we obtain the following form
$$
H(\psi):=\frac{\pi}{\sin \pi \varrho}\int\limits_{0}^{2 \pi}   \cos \varrho (|\psi-\varphi|-\pi)  d\Delta(\varphi).
$$
Having noticed the following correspondence between sets $\varphi\in [0,\psi]\Rightarrow\xi\in [\varrho(\psi-\pi),-\varrho\pi],$ $\varphi\in [\psi,\psi+\pi]\Rightarrow\xi\in [ -\varrho\pi,0],$ $\varphi\in [\psi+\pi, 2\pi]\Rightarrow\xi\in [  0,\varrho(\pi-\psi)],$ where $\xi:=\varrho (|\psi-\varphi|-\pi),$ we conclude that $\cos \varrho (|\psi-\varphi|-\pi) \geq 0,\,\varphi\in [0,2\pi].$ Taking into account the fact that the function $\Delta(\varphi)$ is non-decreasing, we obtain the desired result.
\end{proof}

\section{Main results}

 In this section, we have a challenge how to generalize results \cite{firstab_lit:1kukushkin2021} in the way to make an efficient tool for study abstract fractional evolution equations with the operator function in the second term. The operator function is supposed to be defined on the set of unbounded non-selfadjoint operators.
  First of all,  we consider   statements
with the  necessary  refinement caused by  the involved functions, here we should note that a particular case corresponding to a power function $\varphi$   was considered by Lidskii \cite{firstab_lit:1Lidskii}.  Secondly, we find conditions that guarantee convergence of the involved integral constructions and formulate lemmas giving us a tool for further study.   As  a main result, we prove an existence and uniqueness theorem for an abstract  fractional evolution equation with the operator function  in the second term. Finally, we discuss  an approach   that  we can implement  to apply  the abstract theoretical results  to  concrete  evolution equations.\\

\noindent{\bf Estimate  of a real component from below}\\

In this subsection we aim to produce estimates of the real component from bellow for the technical  purposes formulated in the further paragraphs.  We should admit that it is formulated in rather rough manner but its principal value is the discovered way of constructing entire functions fallen in the scope of the theory of fractional evolution equations with the operator function in the second term. Apparently, having put a base, we can weaken conditions imposed upon the entire functions class afterwards and in this way come to the natural theory. We need involve some technicalities related to the estimates of the entire unction  from bellow, we should remind that this matter is very important in the constructed theory. We consider a sector $\mathfrak{L}_{0}(\theta_{0},\theta_{1}):=\{z\in \mathbb{C},\, \theta_{0}\leq arg z \leq\theta_{1}\}$ and use a short-hand notation $\mathfrak{L}_{0}(\theta):=\mathfrak{L}_{0}(-\theta,\theta).$

\begin{lem}\label{L2a} Assume that the entire function $f$ is of the proximate   order $\varrho(r),\,\varrho\in (0,1/2],$    maps the ray  $\mathrm{arg}\,z=\theta_{0}$   within  a sector $\mathfrak{L}_{0}(\zeta),\,0<\zeta<\pi/2,$   the set of zeros is regularly distributed relative to the proximate order and  satisfies one of the conditions $(\mathrm{I})$ or $(\mathrm{II}),$   there exists $\varepsilon>0$ such that  the    angle  $ \theta_{0}-\varepsilon<arg\,z  <\theta_{0}+\varepsilon  $    do not contain the zeros  with the sufficiently large absolute value.
 Then,  for a sufficiently large value $r,$ the following relation holds
$$
\mathrm{Re} f(z)>   Ce^{ H(\theta_{0})r^{\varrho(r)}}, \mathrm{arg}\,z=\theta_{0}.
$$
\end{lem}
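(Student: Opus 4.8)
The plan is to extract the desired lower bound from the growth formula of Theorem~\ref{T2a}, read along the single ray $\mathrm{arg}\,z=\theta_{0}$, by combining it with two elementary observations: that Lemma~\ref{L1a} makes $H(\theta_{0})$ positive once $\varrho\in(0,1/2]$, and that the hypothesised bound $|\mathrm{arg}\,f(z)|\le\zeta<\pi/2$ along the ray converts a lower estimate for $|f(z)|$ into one for $\mathrm{Re}\,f(z)$. Concretely, I would proceed in three steps: first verify that the ray $\mathrm{arg}\,z=\theta_{0}$ eventually lies outside the exceptional set of circles, so that Theorem~\ref{T2a} applies at its points; then read off the asymptotics of $\ln|f|$ there; and finally pass from $|f|$ to $\mathrm{Re}\,f$.

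For the first step, note that the zeros $a_{n}$ lying in the wedge $\theta_{0}-\varepsilon<\mathrm{arg}\,z<\theta_{0}+\varepsilon$ are, by assumption, bounded in modulus, hence finite in number, so their exceptional circles (together with those of the finitely many zeros of small modulus lying off the wedge) are confined to a bounded region. For a zero $a_{n}$ with $|\mathrm{arg}\,a_{n}-\theta_{0}|\ge\varepsilon$ and large modulus, its distance to the ray is at least $|a_{n}|\sin\varepsilon$, whereas its exceptional circle has radius $d|a_{n}|^{1-\varrho(|a_{n}|)/2}$ under $(\mathrm{I})$ and $d|a_{n}|^{1-\varrho(|a_{n}|)}$ under $(\mathrm{II})$; since $\varrho(|a_{n}|)\to\varrho\le 1/2$, the exponent stays below $1$, so this radius is $o(|a_{n}|)$ and eventually smaller than $|a_{n}|\sin\varepsilon$. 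Hence there is $R_{0}$ beyond which no exceptional circle meets the ray. I expect this to be the main obstacle, in the sense that it is the only place where genuine work is needed: because $\varrho\le 1/2$ the exceptional circles are comparatively large — of radius up to $|a_{n}|^{3/4}$ — and one must be sure they keep clear of the ray, which is exactly what the zero-free angular neighbourhood of $\theta_{0}$ is there to guarantee.

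With this in place, Theorem~\ref{T2a} gives, at $z=re^{i\theta_{0}}$,
$$
\ln|f(re^{i\theta_{0}})|=H(\theta_{0})\,r^{\varrho(r)}+o\bigl(r^{\varrho(r)}\bigr),\qquad r\to\infty;
$$
any prefactor $z^{m}e^{P(z)}$ from Theorem~\ref{T1a} is irrelevant here, since $\mathrm{deg}\,P\le\varrho<1$ forces $P$ constant and $m\ln r=o(r^{\varrho(r)})$. By Lemma~\ref{L1a} the coefficient $H(\theta_{0})$ is positive, so the right-hand side tends to $+\infty$. Finally, since $f(re^{i\theta_{0}})\in\mathfrak{L}_{0}(\zeta)$ with $\zeta<\pi/2$, we have
$$
\mathrm{Re}\,f(re^{i\theta_{0}})=|f(re^{i\theta_{0}})|\cos\bigl(\mathrm{arg}\,f(re^{i\theta_{0}})\bigr)\ge(\cos\zeta)\,|f(re^{i\theta_{0}})|,
$$
and substituting the displayed asymptotics yields, for every $\delta>0$ and all sufficiently large $r$, the bound $\mathrm{Re}\,f(re^{i\theta_{0}})\ge(\cos\zeta)\,e^{(H(\theta_{0})-\delta)r^{\varrho(r)}}$, which is the asserted estimate with $C=\cos\zeta$.

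One cosmetic point deserves mention: Theorem~\ref{T2a} supplies an asymptotic \emph{equality}, so stating the conclusion with a fixed constant in front of $e^{H(\theta_{0})r^{\varrho(r)}}$ literally amounts to absorbing the $o(r^{\varrho(r)})$ correction into the exponent, i.e.\ to allowing an arbitrarily small loss $\delta$ there. This is immaterial for the intended applications, precisely because $H(\theta_{0})>0$ forces $e^{(H(\theta_{0})-\delta)r^{\varrho(r)}}$ to still grow.
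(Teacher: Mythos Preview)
Your proposal is correct and follows essentially the same route as the paper: Hadamard factorization (with $\deg P=0$ since $\varrho<1$), Theorem~\ref{T2a} along the ray to estimate $|f|$ from below, Lemma~\ref{L1a} for $H(\theta_{0})>0$, the sector hypothesis to pass from $|f|$ to $\mathrm{Re}\,f$, and the zero-free wedge to keep the ray clear of the exceptional circles. Your treatment of the exceptional circles and of the $o(r^{\varrho(r)})$ correction is in fact more explicit than the paper's, but the argument is the same in substance.
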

\begin{proof} Using Theorem \ref{T1a}, we obtain the following representation
$$
f(z)=Cz^{m} \prod\limits_{n=1}^{\infty}G\left(\frac{z}{a_{n}};p\right) ,
$$
here we should remark that    $\mathrm{deg} P(z)=0.$   Let us show that the proximate order of the canonical product of the entire function is the same, we have
$$
M_{f}(r)=Cr^{m} M_{F}(r),\,F(z)=\prod\limits_{n=1}^{\infty}G\left(\frac{z}{a_{n}};p\right).
$$
Therefore in accordance with the definition of proximate order, we have
$$
 \overline{\lim \limits_{r\rightarrow \infty}}  \left\{\frac{m\ln r+\ln C }{r^{\varrho(r)}}  +\frac{\ln M_{F}(r)}{ r^{\varrho(r)}}\right\}  =\sigma_{f},\;0<\sigma_{f}<\infty,
$$
from what follows easily the fact $0<\sigma_{F}<\infty,$ moreover $\sigma_{F}=\sigma_{f}.$
 Note that due to the condition that guarantees that the image of the ray  $\mathrm{arg}\,z=\theta_{0}$ belongs to a sector in the right half-plane, we get
$$
\mathrm{Re}f(z)\geq (1+\tan \zeta)^{-1/2} |f(z)|,\,r=|z|,\; \mathrm{arg}\,z=\theta_{0}.
$$
Applying Theorem \ref{T2a}  we conclude, that excluding  the intersection of the  exceptional set of circulus with the ray  $ \mathrm{arg}\,z=\theta_{0},$  the following relation holds for sufficiently large values $r$
$$
|f(z)|=C r^{m} \left|\prod\limits_{n=1}^{\infty}G\left(\frac{z}{a_{n}};p\right)\right|\geq Cr^{m}e^{H(\theta_{0})r^{\varrho(r)}},
$$
 where $H(\theta_{0})>0$ in accordance with Lemma \ref{L1a}.  It is clear that if we show that the   intersection of the ray  $\mathrm{arg}\,z=\theta_{0}$ with the exceptional set of circulus  is empty, then we complete the proof. Note that the character of the zeros distribution   allows us to claim that is true. In accordance with the lemma conditions,  it suffices to  consider the neighborhoods  of the zeros defined as follows    $|z-a_{n}|< d |a_{n}|^{1-\varrho(|a_{n}|)},\,|z-a_{n}|< d |a_{n}|^{1-\varrho(|a_{n}|)/2}$ and note that  $0< \varrho(|a_{n}|)<1$ for a sufficiently large number $n\in \mathbb{N},$ since
 $
  \varrho(|a_{n}|)\rightarrow\varrho,\,n\rightarrow\infty.
 $
 Here, we ought to remind that the zeros are arranged in order with their absolute value growth.
Thus, using simple properties of the power function with the positive exponent less than one, we obtain the fact that the intersection of the  exceptional set of circulus with the ray  $ \mathrm{arg}\,z=\theta_{0}$ is empty for a sufficiently large $n\in \mathbb{N}.$
\end{proof}

\noindent{\bf Classical  lemmas in the refined form }\\

Bellow, we consider an invertible operator $B$ with a dense range  and use a notation $W:=B^{-1}.$ This agrement is justified by the significance of the operator with a compact resolvent, the detailed information on which spectral properties can be found in the papers   cited in the introduction section.
Consider an entire function    $\varphi$ that can be represented by its Taylor series about the point zero.  Denote by
\begin{equation}\label{12a}
\varphi(W):=\sum\limits_{n=0}^{\infty}  c_{n}  W^{n}
\end{equation}
a formal construction called by an operator function, where $c_{n}$ are the Taylor coefficients corresponding to the function $\varphi.$
The lemma  given bellow are devoted to the study of the conditions under which being imposed the series of operators \eqref{12a} converges on some elements of the Hilbert space $\mathfrak{H},$   thus the operator $\varphi(W)$ is defined. Assume that  a compact operator $T: \mathfrak{H}\rightarrow \mathfrak{H}$ is such that
$
\overline{\Theta(T)}\subset \mathfrak{L}_{0}(\theta_{0},\theta_{1} ).
$
Then,  we put the following contour   in correspondence to the operator
\begin{equation*}
\vartheta(T):=\left\{\lambda:\;|\lambda|=r>0,\, \theta_{0} \leq\mathrm{arg} \lambda \leq \theta_{1} \right\}\cup\left\{\lambda:\;|\lambda|>r,\;
  \mathrm{arg} \lambda =\theta_{0} ,\,\mathrm{arg} \lambda =\theta_{1} \right\},
\end{equation*}
where   the number $r$ is chosen so that the operator  $ (I-\lambda T)^{-1} $ is regular within the corresponding closed circle.
Bellow,  we consider the following hypophyses separately written for convenience of the reader.   \\

\noindent $(\mathrm{H}\mathrm{I})$ The operator $B$ is compact, $\overline{\Theta(B)}\subset \mathfrak{L}_{0}(\theta_{0},\theta_{1} ),$   the entire function $\varphi$ of the order less than a half  maps the sector $\mathfrak{L}_{0}(\theta_{0},\theta_{1} )$ into the sector $\mathfrak{L}_{0}(\varpi),\,\varpi<\pi/2\alpha,\,\alpha>0,$  its zeros with a sufficiently large absolute value   do not belong to the sector $\mathfrak{L}_{0}(\theta_{0},\theta_{1} ).$

\begin{lem}\label{L3a}  Assume that the condition $(\mathrm{H}\mathrm{I})$ holds,       the entire function $\varphi$ is of the order less than a half.  Then the following relation holds
$$
 \int\limits_{\vartheta(B)}\varphi(\lambda) e^{-\varphi^{\alpha}(\lambda)  t} B(I-\lambda B)^{-1}fd\lambda= \varphi(W)\!\!\!\int\limits_{\vartheta(B)}e^{-\varphi^{\alpha}(\lambda) t}B(I-\lambda B)^{-1}fd\lambda,\,f\in \mathrm{D}(W^{n}),\,\forall n\in \mathbb{N},
$$
moreover
$$
\lim\limits_{t\rightarrow+0}\frac{1}{2 \pi i}\int\limits_{\vartheta(B)}e^{-\varphi^{\alpha}(\lambda) t}B(I-\lambda B)^{-1}fd\lambda=f,\,f\in \mathrm{D}(W).
$$
\end{lem}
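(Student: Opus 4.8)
\emph{Overall strategy.} I would prove the two assertions in turn: the first is essentially algebraic, resting on a divided‑difference cancellation and Cauchy's theorem; the second is, after a residue computation, precisely the Abel--Lidskii summability of the root‑vector expansion. Throughout I write $B(I-\lambda B)^{-1}=(W-\lambda)^{-1}$ and record the standing properties of the contour. On $\vartheta(B)$ the function $\varphi$ takes values in $\mathfrak{L}_{0}(\varpi)$ with $\varpi\alpha<\pi/2$, so the principal branch of $\varphi^{\alpha}$ is well defined there and
$$
\mathrm{Re}\,\varphi^{\alpha}(\lambda)\geq\cos(\varpi\alpha)\,|\varphi(\lambda)|^{\alpha};
$$
by Lemma~\ref{L2a}, applied on each of the rays $\mathrm{arg}\,\lambda=\theta_{0},\theta_{1}$ (which by $(\mathrm{H}\mathrm{I})$ carry no zeros of $\varphi$ of large modulus), one has $|\varphi(\lambda)|\to\infty$ with a lower bound $\ln|\varphi(\lambda)|\gtrsim|\lambda|^{\varrho(|\lambda|)}$; finally the resolvent estimate $\|B(I-\lambda B)^{-1}\|\leq C/|\lambda|$ holds uniformly on $\vartheta(B)$, and $r$ is chosen so that $(I-\lambda B)^{-1}$ is regular in the disc $|\lambda|\leq r$ while, by $(\mathrm{H}\mathrm{I})$, no zero of $\varphi$ lies in the part of $\mathfrak{L}_{0}(\theta_{0},\theta_{1})$ outside that disc. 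Hence for each fixed $t>0$ the factor $e^{-\varphi^{\alpha}(\lambda)t}$ decays on $\vartheta(B)$ faster than any power of $|\lambda|$, and all integrals below converge absolutely.

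\emph{The first identity.} The key point is that, for $f$ in the domain of $\varphi(W)$, the map $\lambda\mapsto[\varphi(W)-\varphi(\lambda)](W-\lambda)^{-1}f$ is \emph{entire} in $\lambda$: expanding $W^{k}-\lambda^{k}=(W-\lambda)\sum_{j=0}^{k-1}W^{k-1-j}\lambda^{j}$ term by term in $\varphi(W)-\varphi(\lambda)=\sum_{k}c_{k}(W^{k}-\lambda^{k})$ produces the power series $\sum_{k}c_{k}\sum_{j=0}^{k-1}W^{k-1-j}\lambda^{j}f$, and the apparent poles of $(W-\lambda)^{-1}$ at the eigenvalues $\nu_{n}$ of $W$ cancel, since on each finite‑dimensional root subspace $\varphi(W)$ coincides with the value furnished by the holomorphic functional calculus, so the divided difference $[\varphi(W)-\varphi(\lambda)](W-\lambda)^{-1}$ continues holomorphically across $\nu_{n}$. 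Therefore $[\varphi(W)-\varphi(\lambda)](W-\lambda)^{-1}f\,e^{-\varphi^{\alpha}(\lambda)t}$ is holomorphic inside $\vartheta(B)$; being dominated there by $C|\varphi(\lambda)|\,e^{-c\,t\,|\varphi(\lambda)|^{\alpha}}\to0$, its contour integral vanishes by Cauchy's theorem (close the contour at infinity). This yields
$$
\int_{\vartheta(B)}\varphi(\lambda)e^{-\varphi^{\alpha}(\lambda)t}B(I-\lambda B)^{-1}f\,d\lambda=\int_{\vartheta(B)}\varphi(W)\bigl[e^{-\varphi^{\alpha}(\lambda)t}B(I-\lambda B)^{-1}f\bigr]d\lambda .
$$
To extract $\varphi(W)$, note that on $\vartheta(B)$ one has $\|W^{n}[e^{-\varphi^{\alpha}(\lambda)t}(W-\lambda)^{-1}f]\|\leq(C/r)\|W^{n}f\|$, while the hypothesis that $\varphi$ has order below $1/2$ makes the coefficients $c_{n}$ decay rapidly enough that $\sum_{n}|c_{n}|\,\|W^{n}f\|<\infty$ for the admissible $f$; so the series $\sum_{n}c_{n}W^{n}[\,\cdot\,]$ converges uniformly on $\vartheta(B)$, and interchanging it with the integral (each $W^{n}$ being closed and commuting with the Bochner integral) produces exactly $\varphi(W)\int_{\vartheta(B)}e^{-\varphi^{\alpha}(\lambda)t}B(I-\lambda B)^{-1}f\,d\lambda$.

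\emph{The limit $t\to+0$.} For $f\in\mathrm{D}(W)$ the resolvent identity gives $(W-\lambda)^{-1}f=\lambda^{-1}(W-\lambda)^{-1}Wf-\lambda^{-1}f$. The contribution of the last term is $\bigl(\tfrac{1}{2\pi i}\int_{\vartheta(B)}\lambda^{-1}e^{-\varphi^{\alpha}(\lambda)t}d\lambda\bigr)f$, and it vanishes, since $\lambda^{-1}e^{-\varphi^{\alpha}(\lambda)t}$ is holomorphic inside $\vartheta(B)$ — the pole $\lambda=0$ lies in the disc $|\lambda|\leq r$, hence outside the region bounded by $\vartheta(B)$ — and decays at infinity. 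In the first term $\lambda^{-1}(W-\lambda)^{-1}$ decays like $|\lambda|^{-2}$, so closing the contour at infinity evaluates the integral as the sum of the residues at $\nu_{n}$; this series converges for every $t>0$ because $e^{-\varphi^{\alpha}(\nu_{n})t}$ decays super‑exponentially ($\mathrm{Re}\,\varphi^{\alpha}(\nu_{n})\geq c|\varphi(\nu_{n})|^{\alpha}\to\infty$) while the Riesz projections $P_{n}$ grow at most polynomially, and a direct computation with the Laurent expansion of $(W-\lambda)^{-1}$ about $\nu_{n}$, using $WP_{n}=P_{n}W$ and the nilpotence of $(W-\nu_{n})P_{n}$, identifies the sum (up to the orientation sign) with the weighted partial sums $\sum_{n}e^{-\varphi^{\alpha}(\nu_{n})t}P_{n}f$ of the root‑vector expansion of $f$. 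It then remains to invoke the Abel--Lidskii summability theorem (\cite{firstab_lit:1Lidskii}, \cite{firstab_lit:1kukushkin2021}), whose hypotheses hold here by virtue of $\varpi\alpha<\pi/2$ and the lower bound for $\varphi^{\alpha}$ from Lemma~\ref{L2a}: $\lim_{t\to+0}\sum_{n}e^{-\varphi^{\alpha}(\nu_{n})t}P_{n}f=f$ for $f\in\mathrm{D}(W)$, which is the asserted limit.

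\emph{Main obstacle.} The delicate step is the passage $t\to+0$ in the second part. One cannot apply dominated convergence directly on $\vartheta(B)$, nor close the contour at $t=0$, because for a non‑selfadjoint $W$ the ordinary root‑vector series $\sum_{n}P_{n}f$ need not converge; the regularizer $e^{-\varphi^{\alpha}(\lambda)t}$ must be kept and the limit understood in the Abel--Lidskii sense. Verifying that the residue expansion reproduces precisely those Abel--Lidskii partial sums — correctly absorbing the Jordan‑block corrections — and that the underlying summability theorem applies under the present growth hypotheses ($\varrho<1/2$, $\varpi\alpha<\pi/2$) is where the real work is concentrated; the same restriction $\varrho<1/2$ is exactly what guarantees, in the first part, that $e^{-\varphi^{\alpha}(\lambda)t}B(I-\lambda B)^{-1}f$ belongs to $\mathrm{D}(\varphi(W))$ with enough uniformity to pull $\varphi(W)$ through the integral.
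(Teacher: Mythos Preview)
Your first-identity argument has a real gap. The divided-difference route presupposes that $\varphi(W)f$ is a well-defined element of $\mathfrak{H}$ and, more specifically, that $\sum_{n}|c_{n}|\,\|W^{n}f\|<\infty$; you assert this follows from $\varrho<1/2$, but the hypothesis is only $f\in\bigcap_{n}\mathrm{D}(W^{n})$, which provides no growth control on $\|W^{n}f\|$ whatsoever, so that series need not converge. The paper sidesteps this entirely: it first interchanges $\sum c_{n}$ with $\int_{\vartheta(B)}$ (via uniform convergence on truncated contours, using the Taylor-coefficient estimate and Stirling), and then shows \emph{term by term} that
\[
\int_{\vartheta(B)}e^{-\varphi^{\alpha}(\lambda)t}\lambda^{n}B(I-\lambda B)^{-1}f\,d\lambda
= W^{n}\!\int_{\vartheta(B)}e^{-\varphi^{\alpha}(\lambda)t}B(I-\lambda B)^{-1}f\,d\lambda,
\]
by the algebraic identity $\lambda^{k}B^{k}(I-\lambda B)^{-1}=(I-\lambda B)^{-1}-\sum_{j<k}\lambda^{j}B^{j}$ together with the vanishing of the scalar integrals $\beta_{k}(t)=\int_{\vartheta(B)}\lambda^{k}e^{-\varphi^{\alpha}(\lambda)t}d\lambda$. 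The latter requires killing the arc integral at radius $R\to\infty$, and here the paper invokes Wiman's theorem (Theorem~30, \S18, Ch.~I of \cite{firstab_lit:Eb. Levin}) to obtain $m_{\varphi}(R_{n})\geq[M_{\varphi}(R_{n})]^{\cos\pi\varrho-\varepsilon}$ on a subsequence; your ``close at infinity'' steps omit this, and your bound on the divided difference inside the sector is not justified either, since after cancellation its norm is a power series in $\lambda$ with coefficients involving all $\|W^{m}f\|$. The point is that it is the smoothed element $g=\int e^{-\varphi^{\alpha}t}B(I-\lambda B)^{-1}f\,d\lambda$, not $f$, that lies in $\mathrm{D}(\varphi(W))$, with the concrete estimate $\|W^{n}g\|\leq Ct^{-n}n!$ coming from the integral representation.

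For the limit $t\to+0$, your residue-expansion route followed by an appeal to Abel--Lidskii summability is, in the architecture of this paper, circular: the paper's Abel--Lidskii statement is Theorem~\ref{T3}, whose proof \emph{uses} the second assertion of the present lemma to pass to the limit. The paper instead refers to a direct contour argument (Lemma~4 of \cite{firstab_lit(axi2022)}), with the required decay on the rays supplied by Lemma~\ref{L2a}, which establishes the limit without first expanding in root vectors.
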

\begin{proof} Firstly, we should note that the conditions imposed upon the order of the function $\varphi$ alow us to claim that the latter integral converges for a fixed value of the parameter $t.$
Let us establish the formula
\begin{equation}\label{6y}
 \int\limits_{\vartheta(B)}\varphi(\lambda) e^{-\varphi^{\alpha}(\lambda)  t} B(I-\lambda B)^{-1}f d\lambda=\sum\limits_{n=0}^{\infty}c_{n} \!\!
 \int\limits_{\vartheta(B)} e^{-\varphi^{\alpha}(\lambda)  t} \lambda^{n}B(I-\lambda B)^{-1} fd\lambda.
\end{equation}
To prove this fact, we should show  that the following relation holds
\begin{equation}\label{7y}
 \int\limits_{\vartheta_{j}  (B)}\!\!\varphi(\lambda) e^{-\varphi^{\alpha}(\lambda)  t} B(I-\lambda B)^{-1}f d\lambda=\sum\limits_{n=0}^{\infty}c_{n} \!\! \int\limits_{\vartheta_{j}  (B)} e^{-\varphi^{\alpha}(\lambda)  t} \lambda^{n}B(I-\lambda B)^{-1} fd\lambda,
\end{equation}
where
$$
\vartheta_{j}(B):=\left\{\lambda:\;|\lambda|=r>0,\, \theta_{0} \leq\mathrm{arg} \lambda \leq \theta_{1} \right\}\cup\left\{\lambda:\;r<|\lambda|<r_{j},\,\mathrm{arg} \lambda =\theta_{0}   ,\,\mathrm{arg} \lambda =\theta_{1} \right\},
$$
$r_{j}\uparrow \infty.$
Note that in accordance with  Lemma 6 \cite{firstab_lit:1kukushkin2021}, we get
$$
\|(I-\lambda B)^{-1}\|\leq C,\,r<|\lambda|<r_{j},\,\mathrm{arg} \lambda =\theta_{0}   ,\,\mathrm{arg} \lambda =\theta_{1} .
$$
Using this estimate, we can easily obtain the fact
$$
\sum\limits_{n=0}^{\infty}|c_{n}|    | e^{-\varphi^{\alpha}(\lambda)  t}| |\lambda^{n}|\cdot\|B(I-\lambda B)^{-1} f\|  \leq C\|B\|\cdot \|f\| \sum\limits_{n=0}^{\infty}    |c_{n}|  |\lambda|^{n}      e^{- \mathrm{Re}\varphi^{\alpha}(\lambda)  t},\,\lambda\in \vartheta_{j}(B),
$$
where the latter series is convergent. Therefore, reformulating      the well-known theorem of calculus  on the absolutely convergent series in   terms of the norm, we obtain   \eqref{7y}. Now, let us show that the series
\begin{equation}\label{8y}
\sum\limits_{n=0}^{\infty}c_{n}\!\!\!\int\limits_{ \vartheta_{j}  (B)}  e^{-\varphi^{\alpha}(\lambda)  t} \lambda^{n}B(I-\lambda B)^{-1} fd\lambda
\end{equation}
is uniformly convergent with respect to $j\in \mathbb{N}.$
 Using Lemma 1 \cite{firstab_lit(axi2022)}, we get  a trivial inequality
$$
 \left\|\,\int\limits_{\vartheta_{j}  (B)} e^{-\varphi^{\alpha}(\lambda)  t} \lambda^{n}B(I-\lambda B)^{-1} fd\lambda \right\|_{\mathfrak{H}}\leq C\|f\|_{\mathfrak{H}} \!\!\!\int\limits_{\vartheta_{j}  (B)} e^{-\mathrm{Re} \varphi^{\alpha}(\lambda)  t  } |\lambda|^{n} |d\lambda|\leq
 $$
 $$
 \leq C\|f\|_{\mathfrak{H}} \int\limits_{\vartheta_{j}  (B)} e^{-C|\varphi(\lambda)|^{\alpha}t} |\lambda|^{n} |d\lambda|.
 $$
Here, we should note that to obtain the desired result one is satisfied with a rather rough estimate dictated by the estimate obtained in Lemma \ref{L2a}, we get
$$
 \int\limits_{\vartheta_{j}  (B)} e^{-| \varphi(\lambda)|^{\alpha}  t  } |\lambda|^{n} |d\lambda|\leq    C\int\limits_{r}^{r_{j}} e^{-  x  t  } x^{n}  dx\leq C t^{-n}\Gamma(n+1) .
$$
Thus, we obtain
\begin{equation*}
\left\|\,\int\limits_{ \vartheta_{j}(B)} e^{-\varphi^{\alpha}(\lambda)  t} \lambda^{n}B(I-\lambda B)^{-1} fd\lambda \right\|_{\mathfrak{H}}\leq C t^{-n}n! \,.
\end{equation*}
Using the standard  formula   establishing the estimate for the  Taylor coefficients  of   the entire function, then applying    the Stirling formula, we get
 $$
|c_{n}|<\left(e \sigma \varrho \right)^{n/\varrho} n^{-n/\varrho}< (2\pi)^{1/2\varrho}(\sigma\varrho)^{n/\varrho}\left(\frac{\sqrt{n}}{n!}\right)^{1/\varrho}\!\!\!,
$$
where $0<\sigma<\infty$ is a type of the function $\varphi.$ Thus, we obtain
\begin{equation*}
 \sum\limits_{n=1}^{\infty}|c_{n}|\left\|\,\int\limits_{\vartheta_{j}(B)} e^{-\varphi^{\alpha}(\lambda)  t} \lambda^{n}B(I-\lambda B)^{-1} fd\lambda\right\|\leq  C\sum\limits_{n=1}^{\infty}(\sigma\varrho)^{n/\varrho}t^{-n}(n!)^{1-1/\varrho} n^{1/2\varrho}  .
\end{equation*}
The latter series is convergent for an arbitrary fixed  $t>0,$ what proves the  uniform convergence of the series \eqref{8y} with respect to $j\,.$ Therefore, reformulating      the well-known theorem of calculus applicably to the norm of the Hilbert space,  taking into accounts the facts
\begin{equation*}
 \int\limits_{\vartheta_{j} (B)}\!\!\varphi(\lambda) e^{-\varphi^{\alpha}(\lambda)  t} B(I-\lambda B)^{-1}f d\lambda \stackrel{\mathfrak{H}}{\longrightarrow} \!\int\limits_{\vartheta   (B)}\!\!\varphi(\lambda) e^{-\varphi^{\alpha}(\lambda)  t} B(I-\lambda B)^{-1}f d\lambda,
 $$
 $$
 \int\limits_{\vartheta_{j}  (B)}\!\! e^{-\varphi^{\alpha}(\lambda)  t} \lambda^{n}B(I-\lambda B)^{-1} fd\lambda\stackrel{\mathfrak{H}}{\longrightarrow}\!\int\limits_{\vartheta   (B)} e^{-\varphi^{\alpha}(\lambda)  t} \lambda^{n}B(I-\lambda B)^{-1} fd\lambda,\;j\rightarrow\infty,
\end{equation*}
we obtain  formula \eqref{6y}. Further, using the formula
\begin{equation*}
  \lambda^{k} B^{k}(I-\lambda B)^{-1}=(I-\lambda B)^{-1}-(I+\lambda B+...+\lambda^{k-1}B^{k-1}),\;k\in \mathbb{N},
\end{equation*}
taking into account the facts that the operators  $B^{k}$ and $(I-\lambda B)^{-1}$ commute,
we obtain
$$
 \int\limits_{\vartheta(B)}e^{-\varphi^{\alpha}(\lambda)  t}\lambda^{n}B(I-\lambda B)^{-1} fd\lambda=
$$
$$
= \int\limits_{\vartheta(B)}e^{-\varphi^{\alpha}(\lambda)  t}B(I-\lambda B)^{-1}W^{n}fd\lambda- \int\limits_{\vartheta(B)}e^{-\varphi^{\alpha}(\lambda)  t} \sum\limits_{k=0}^{n-1}\lambda^{k}B^{k+1} W^{n}fd\lambda=I_{1}(t)+I_{2}(t).
$$
Since the operators $W^{n}$ and $B(I-\lambda B)^{-1}$ commute,   this fact can be obtained  by direct calculation, we get
$$
I_{1}(t)=W^{n}\!\!\!\int\limits_{\vartheta(B)}e^{-\varphi^{\alpha}(\lambda)  t}B(I-\lambda B)^{-1} fd\lambda.
$$
Consider $I_{2}(t),$ using the technique applied in Lemma 5 \cite{firstab_lit(axi2022)} it is rather reasonable to consider the following representation
\begin{equation*}
 I_{2}(t):=
 -\sum\limits_{k=0}^{n-1 }\beta_{k}(t)B^{k-n+1}  f,\;
   \beta_{k}(t):=\!\!\int\limits_{\vartheta(B)}\!\!e^{-\varphi^{\alpha}(\lambda)t} \lambda^{k}d\lambda.
\end{equation*}
Analogously to the scheme of   reasonings of Lemma 5 \cite{firstab_lit(axi2022)}, we can show that $\beta_{k}(t)=0,$ under  the imposed  condition of the entire function growth regularity. Bellow, we produce a complete reasoning to avoid any kind of   misunderstanding.
Let us show that  $\beta_{k}(t)=0,$  define a contour  $\vartheta_{R}(B):= \mathrm{Fr}\left\{\mathrm{int }\,\vartheta(B) \,\cap \{\lambda:\,r<|\lambda|<R \}\right\}$ and let us prove that there exists such a sequence    $\{R_{n}\}_{1}^{\infty},\,R_{n}\uparrow \infty$ that
\begin{equation}\label{17a}
  \oint\limits_{\vartheta_{R_{n}}(B)}\!\!\!\!e^{-\varphi^{\alpha}(\lambda)  t} \lambda^{k}d\lambda\rightarrow \beta_{k}(t),\;n\rightarrow \infty.
\end{equation}
 Consider a decomposition of the contour $\vartheta_{R}(B)$ on terms
$
\tilde{\vartheta}_{ R}:=\{\lambda:\,|\lambda|=R,\, \theta_{0}  \leq\mathrm{arg} \lambda  \leq\theta_{1}  \},
$
and
$
\hat{\vartheta}_{R}:= \{\lambda:\,|\lambda|=r,\,\theta_{0}  \leq\mathrm{arg} \lambda  \leq\theta_{1}\}\cup
\{\lambda:\,r<|\lambda|<R,\, \mathrm{arg} \lambda  =\theta_{0} ,\,\mathrm{arg} \lambda  = \theta_{1} \}.
$
We have
$$
 \oint\limits_{\vartheta_{R}(B)}e^{-\varphi^{\alpha}(\lambda)  t} \lambda^{k}d\lambda=
 \int\limits_{\tilde{\vartheta}_{ R}}e^{-\varphi^{\alpha}(\lambda)  t} \lambda^{k}d\lambda+
  \int\limits_{\hat{\vartheta}_{R}}e^{-\varphi^{\alpha}(\lambda)  t} \lambda^{k}d\lambda.
$$
Having noticed that the integral at the left-hand side of the last relation   equals to zero, since the   function under the integral is analytic   inside the contour, we  come to the conclusion that to obtain the desired result it suffices to show that
\begin{equation}\label{18a}
 \int\limits_{\tilde{\vartheta}_{ R_{n}}}e^{-\varphi^{\alpha}(\lambda)  t} \lambda^{k}d\lambda\rightarrow 0,
\end{equation}
where     $\{R_{n}\}_{1}^{\infty},\,R_{n}\uparrow \infty.$
We have
\begin{equation*}
  \left|\,\int\limits_{\tilde{\vartheta}_{ R}}e^{-\varphi^{\alpha}(\lambda)  t}\lambda^{k}    d \lambda\,\right| \leq  R^{k}\int\limits_{\tilde{\vartheta}_{ R}}|e^{-\varphi^{\alpha}(\lambda)  t}| |d \lambda|\leq R^{k+1}\int\limits_{ \theta_{0} }^{\theta_{1}}  e^{-\mathrm{Re} \varphi^{\alpha}(\lambda)  t  } d \,\mathrm{arg} \lambda.
\end{equation*}
Consider a value   $\mathrm{Re}\,\varphi^{\alpha}(\lambda),\, \lambda\in  \tilde{\vartheta}_{ R}$  for a sufficiently large value $R.$
Using the condition imposed upon the order of the entire function and  applying  the Wieman theorem  (Theorem 30,  \S 18, Chapter I \cite{firstab_lit:Eb. Levin}),  we can claim that there exists such a sequence $\{R_{n}\}_{1}^{\infty},\,R_{n}\uparrow \infty$ that
\begin{equation*}
\forall \varepsilon>0,\,\exists N(\varepsilon):\,e^{- C|\varphi(\lambda )|^{\alpha}t}\leq e^{- C m^{\alpha}_{\varphi}(R_{n})t}\leq e^{- C t[M_{\varphi}(R_{n})]^{(\cos \pi \varrho-\varepsilon)\alpha}},\,\lambda \in \tilde{\vartheta}_{ R_{n}},\,n>N(\varepsilon),
\end{equation*}
where $\varrho$ is the order of the entire  function $\varphi.$ Applying this estimate, we obtain
$$
\int\limits_{ \theta_{0} }^{\theta_{1}}  e^{-\mathrm{Re} \varphi^{\alpha}(\lambda)  t  } d \,\mathrm{arg} \lambda \leq
  \int\limits_{ \theta_{0} }^{\theta_{1}}  e^{- C t |\varphi (\lambda)|^{\alpha} }  d \,\mathrm{arg} \lambda\leq   e^{- C t[M_{\varphi}(R_{n})]^{(\cos \pi \varrho-\varepsilon)\alpha}}\!\!\!\int\limits_{ \theta_{0} }^{\theta_{1}}    d \,\mathrm{arg} \lambda.
$$
The latter estimate gives us \eqref{18a} from what follows \eqref{17a}. Therefore $\beta_{k}(t)=0,$ hence $I_{2}(t)=0$ and we get
$$
 \int\limits_{\vartheta(B)}e^{-\varphi^{\alpha}(\lambda)  t}\lambda^{n}B(I-\lambda B)^{-1} fd\lambda=W^{n}\!\!\!\!\int\limits_{\vartheta(B)}e^{-\varphi^{\alpha}(\lambda) t}B(I-\lambda B)^{-1}fd\lambda.
$$
Substituting the latter relation into the formula \eqref{6y}, we obtain the first statement of the lemma.

The scheme of the proof corresponding to the second statement  is absolutely analogous to the one presented in Lemma 4 \cite{firstab_lit(axi2022)}, we should just use Lemma \ref{L2a} providing the estimates along the sides of the contour. Thus, the completion of the reasonings is due to the technical repetition of the Lemma 4 \cite{firstab_lit(axi2022)} reasonings, we left it to the reader.
\end{proof}

\noindent{\bf   Series expansion  and its application to the  Existence and uniqueness theorems}\\

This paragraph is a climax of the paper, here we represent a theorem that put a beginning of a marvelous research based on the Abell-Lidskii method. The attempt to consider an operator function in the second term  was made in the paper  \cite{firstab_lit:2kukushkin2022}, where we consider a case that is not so difficult  since  the operator function was represented by a Laurent series with a polynomial regular part. In contrast, in this paper we consider a more complicated case, a function that compels us to involve a principally different method of study. The existence and uniqueness theorem given bellow is based upon  one of  the  theorems  represented in \cite{firstab_lit:1kukushkin2021}.

  We can choose a  Jordan basis in   the finite dimensional root subspace $\mathfrak{N}_{q}$ corresponding to the eigenvalue $\mu_{q}$  that consists of Jordan chains of eigenvectors and root vectors  of the operator $B_{q}.$  Each chain has a set of numbers
\begin{equation}\label{12i}
  e_{q_{\xi}},e_{q_{\xi}+1},...,e_{q_{\xi}+k},\,k\in \mathbb{N}_{0},
\end{equation}
  where $e_{q_{\xi}},\,\xi=1,2,...,m $ are the eigenvectors  corresponding   to the  eigenvalue $\mu_{q}.$
Considering the sequence $\{\mu_{q}\}_{1}^{\infty}$ of the eigenvalues of the operator $B$ and choosing a  Jordan basis in each corresponding  space $\mathfrak{N}_{q},$ we can arrange a system of vectors $\{e_{i}\}_{1}^{\infty}$ which we will call a system of the root vectors or following  Lidskii  a system of the major vectors of the operator $B.$
Assume that  $e_{1},e_{2},...,e_{n_{q}}$ is  the Jordan basis in the subspace $\mathfrak{N}_{q}.$  We can prove easily (see \cite[p.14]{firstab_lit:1Lidskii}) that     there exists a  corresponding biorthogonal basis $g_{1},g_{2},...,g_{n_{q}}$ in the subspace $\mathfrak{M}_{q}^{\perp},$ where  $\mathfrak{M}_{q},$ is a subspace  wherein the operator  $B-\mu_{q} I$ is invertible. Using the reasonings \cite{firstab_lit:1kukushkin2021},  we conclude that $\{ g_{i}\}_{1}^{n_{q}}$ consists of the Jordan chains of the operator $B^{\ast}$ which correspond to the Jordan chains  \eqref{12i}, more detailed information can be found in \cite{firstab_lit:1kukushkin2021}.
It is not hard to prove   that  the set  $\{g_{\nu}\}^{n_{j}}_{1},\,j\neq i$  is orthogonal to the set $ \{e_{\nu}\}_{1}^{n_{i}}$ (see \cite{firstab_lit:1kukushkin2021}).  Gathering the sets $\{g_{\nu}\}^{n_{j}}_{1},\,j=1,2,...,$ we can obviously create a biorthogonal system $\{g_{n}\}_{1}^{\infty}$ with respect to the system of the major vectors of the operator $B.$  Consider a sum
\begin{equation}\label{3h}
 \mathcal{A} _{\nu}(\varphi,t)f:= \sum\limits_{q=N_{\nu}+1}^{N_{\nu+1}}\sum\limits_{\xi=1}^{m(q)}\sum\limits_{i=0}^{k(q_{\xi})}e_{q_{\xi}+i}c_{q_{\xi}+i}(t)
\end{equation}
where   $k(q_{\xi})+1$ is a number of elements in the $q_{\xi}$-th Jourdan chain,  $m(q)$ is a geometrical multiplicity of the $q$-th eigenvalue,
\begin{equation}\label{4h}
c_{q_{\xi}+i}(t)=   e^{ -\varphi(\lambda_{q})  t}\sum\limits_{m=0}^{k(q_{\xi})-i}H_{m}(\varphi, \lambda_{q},t)c_{q_{\xi}+i+m},\,i=0,1,2,...,k(q_{\xi}),
\end{equation}
$
c_{q_{\xi}+i}= (f,g_{q_{\xi}+k-i}) /(e_{q_{\xi}+i},g_{q_{\xi}+k-i}),
$
$\lambda_{q}=1/\mu_{q}$ is a characteristic number corresponding to $e_{q_{\xi}},$
$$
H_{m}( \varphi,z,t ):=  \frac{e^{ \varphi(z)  t}}{m!} \cdot\lim\limits_{\zeta\rightarrow 1/z }\frac{d^{m}}{d\zeta^{\,m}}\left\{ e^{-\varphi (\zeta^{-1})t}\right\} ,\;m=0,1,2,...\,,\,.
$$
More detailed information on the considered above   Jordan chains can be found in \cite{firstab_lit:1kukushkin2021}.

\begin{teo}\label{T3}
Assume that the condition $(\mathrm{H}\mathrm{I})$ holds,       the entire function $\varphi$ is of the order less than a half, $ B\in \mathfrak{S}_{s},\,0<s<\infty.$
Then a sequence of natural numbers $\{N_{\nu}\}_{0}^{\infty}$ can be chosen so that
 \begin{equation}\label{26}
  \sum\limits_{\nu=0}^{\infty}\left\|\mathcal{A}_{\nu}(\varphi^{\alpha},t)f\right\|_{\mathfrak{H}}<\infty,\,f\in \mathfrak{H};\;\; f=\lim\limits_{t\rightarrow+0} \sum\limits_{\nu=0}^{\infty}\mathcal{A}_{\nu}(\varphi^{\alpha},t)f,\,f\in \mathrm{D}(W).
\end{equation}
 \end{teo}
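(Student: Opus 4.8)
The plan is to deduce both assertions from properties of the contour integral
$$
\Phi(t)f:=\frac{1}{2\pi i}\int\limits_{\vartheta(B)} e^{-\varphi^{\alpha}(\lambda)t}\,B(I-\lambda B)^{-1}f\,d\lambda ,
$$
which is precisely the operator studied in Lemma \ref{L3a}. The structural ingredient, taken from \cite{firstab_lit:1kukushkin2021}, is that when $\vartheta(B)$ is pushed outward across the characteristic numbers $\lambda_{q}=1/\mu_{q}$, the residue of the integrand at $\lambda_{q}$ --- a pole of $(I-\lambda B)^{-1}$ of order equal to the longest Jordan chain attached to $\mu_{q}$ --- reproduces exactly the block $\sum_{\xi,i}e_{q_{\xi}+i}c_{q_{\xi}+i}(t)$ of \eqref{3h}, the coefficients \eqref{4h} being the Taylor data of $e^{-\varphi^{\alpha}(\zeta^{-1})t}$ at $\zeta=\mu_{q}$, i.e.\ the quantities $H_{m}(\varphi^{\alpha},\lambda_{q},t)$. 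Thus choosing the numbers $N_{\nu}$ reduces to choosing cutting radii $R_{N_{\nu}}\uparrow\infty$; with $\tilde{\vartheta}_{R}$ the outer arc at radius $R$ one gets, for every $M$,
$$
\sum\limits_{\nu=0}^{M}\mathcal{A}_{\nu}(\varphi^{\alpha},t)f=\Phi(t)f+\frac{1}{2\pi i}\int\limits_{\tilde{\vartheta}_{R_{N_{M+1}}}}\!\!\! e^{-\varphi^{\alpha}(\lambda)t}B(I-\lambda B)^{-1}f\,d\lambda-\frac{1}{2\pi i}\bigl(\text{ray tails beyond }R_{N_{M+1}}\bigr),
$$
so everything hinges on two families of estimates: decay along the rays $\mathrm{arg}\,\lambda=\theta_{0},\theta_{1}$, and smallness of the arc integral at the radii $R_{N_{\nu}}$.

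For the rays I would note that, by $(\mathrm{H}\mathrm{I})$, $\varphi^{\alpha}$ maps $\mathfrak{L}_{0}(\theta_{0},\theta_{1})$ into $\mathfrak{L}_{0}(\alpha\varpi)$ with $\alpha\varpi<\pi/2$, hence $\mathrm{Re}\,\varphi^{\alpha}(\lambda)\geq\cos(\alpha\varpi)|\varphi(\lambda)|^{\alpha}$ on the sector; since the zeros of $\varphi$ of large modulus avoid $\mathfrak{L}_{0}(\theta_{0},\theta_{1})$, Lemma \ref{L2a} applied on each boundary ray gives $|\varphi(\lambda)|>C\exp\{H(\theta_{j})|\lambda|^{\varrho(|\lambda|)}\}$ with $H(\theta_{j})>0$ by Lemma \ref{L1a}. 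Consequently $e^{-\mathrm{Re}\,\varphi^{\alpha}(\lambda)t}$ decays faster than any negative power of $|\lambda|$ along the rays, the ray integrals converge absolutely, and their tails beyond radius $R$ tend to zero rapidly. On the outer arc, where $|\varphi|$ may drop to $m_{\varphi}(R)$, I would argue exactly as in the proof of Lemma \ref{L3a}: Wiman's theorem (Theorem 30, \S 18, Chapter I of \cite{firstab_lit:Eb. Levin}) furnishes a sequence $R_{n}\uparrow\infty$ with $m_{\varphi}(R_{n})\geq M_{\varphi}(R_{n})^{\cos\pi\varrho-\varepsilon}\to\infty$ (using $\varrho<1/2$), so the arc integral at $R_{n}$ is exponentially small. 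The resolvent norm on the deformed contour is uniformly bounded by Lemma 6 of \cite{firstab_lit:1kukushkin2021}, while $B\in\mathfrak{S}_{s}$ bounds the count of the $\mu_{q}$ and the total algebraic multiplicity in any annulus; combining these with a Cauchy estimate for the residue on the exceptional circle around each $\lambda_{q}$ (of controlled radius by $(\mathrm{I})$ or $(\mathrm{II})$) yields a bound of $\|\mathcal{A}_{\nu}(\varphi^{\alpha},t)f\|$ by the arc-plus-ray contribution between $R_{N_{\nu}}$ and $R_{N_{\nu+1}}$ times $\|f\|_{\mathfrak{H}}$. Choosing the $R_{N_{\nu}}$ to grow fast enough makes $\sum_{\nu}\|\mathcal{A}_{\nu}(\varphi^{\alpha},t)f\|$ majorised by a convergent series; since the majorant uses only $\|f\|_{\mathfrak{H}}$, this gives the first assertion for all $f\in\mathfrak{H}$, and simultaneously shows $\sum_{\nu}\mathcal{A}_{\nu}(\varphi^{\alpha},t)f=\Phi(t)f$. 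For the second assertion, this identity together with the second statement of Lemma \ref{L3a} gives $\sum_{\nu}\mathcal{A}_{\nu}(\varphi^{\alpha},t)f\to f$ as $t\to+0$ for $f\in\mathrm{D}(W)$.

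The step I expect to be the main obstacle is the estimate of a single block $\mathcal{A}_{\nu}$. Because $\varphi$ is a genuine entire function and not a power, $|\varphi|$ oscillates, so the residue at $\lambda_{q}$ cannot be controlled by a monotone bound; one must weld together the angular-density regularity of the zero set and conditions $(\mathrm{I})/(\mathrm{II})$ (to keep the exceptional circles around the $\lambda_{q}$ disjoint and of prescribed size, which is what makes Cauchy's estimate for the derivatives of $e^{-\varphi^{\alpha}(\lambda)t}$ up to the Jordan-block order available), the Schatten-type control on the distribution of the $\mu_{q}$ and of the block lengths, and the Wiman lower bound on $m_{\varphi}$, and then balance all of this against the choice of the cutting radii $R_{N_{\nu}}$. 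Making the bookkeeping of these three independent growth mechanisms close into a single convergent majorant is the technical heart of the argument; the rest is either quoted from \cite{firstab_lit:1kukushkin2021} or a repetition of the contour manipulations already performed in Lemma \ref{L3a}.
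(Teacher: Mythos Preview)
Your overall architecture --- identify each $\mathcal{A}_{\nu}$ with a contour integral over an annular piece of $\vartheta(B)$ via the residue theorem, estimate the ray pieces with Lemma~\ref{L2a} plus Lemma~6 of \cite{firstab_lit:1kukushkin2021}, estimate the arc pieces with Wiman's theorem, and then invoke the second statement of Lemma~\ref{L3a} for the limit $t\to+0$ --- is exactly the paper's route. But there is a genuine gap in your arc estimate.

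Lemma~6 of \cite{firstab_lit:1kukushkin2021} gives a uniform bound $\|(I-\lambda B)^{-1}\|\le C$ only along the boundary rays $\mathrm{arg}\,\lambda=\theta_{0},\theta_{1}$; it says nothing about circular arcs sweeping through the interior of the sector, where the characteristic numbers $\lambda_{q}$ live and the resolvent blows up. On those arcs the paper uses Lemma~5 of \cite{firstab_lit:1kukushkin2021} instead: for a suitably chosen sequence of radii $\tilde R_{\nu}$ one has
\[
\|(I-\lambda B)^{-1}\|\le e^{\gamma(|\lambda|)|\lambda|^{\sigma}}|\lambda|^{m},\qquad |\lambda|=\tilde R_{\nu},\ \sigma>s,\ m=[\sigma],
\]
with $\gamma(r)\to 0$. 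This is the ``natural lacunae'' estimate, and it is precisely where the hypothesis $B\in\mathfrak{S}_{s}$ actually enters the proof. The arc integral $J_{\nu}$ is then controlled by balancing this resolvent growth $e^{\gamma(\tilde R_{\nu})\tilde R_{\nu}^{\sigma}}$ against the Wiman decay $e^{-Ct[M_{\varphi}(R'_{\nu})]^{(\cos\pi\varrho-\varepsilon)\alpha}}$ and extracting a further subsequence of radii so that the latter dominates. Without this step your bound on the outer arc does not close, and the convergence of $\sum_{\nu}J_{\nu}$ is unproved.

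A second, smaller confusion: conditions $(\mathrm{I})$/$(\mathrm{II})$ are hypotheses on the zeros $a_{n}$ of $\varphi$ (they feed Theorem~\ref{T2a} and Lemma~\ref{L2a}), not on the characteristic numbers $\lambda_{q}$ of $B$; they give you no ``exceptional circles around each $\lambda_{q}$'' and no control on Jordan-block residues. The paper never estimates the residues at the $\lambda_{q}$ individually; each $\mathcal{A}_{\nu}$ is bounded wholesale by the integral over the full annular boundary $\vartheta_{\nu}=\tilde\vartheta_{\nu}\cup\tilde\vartheta_{\nu+1}\cup\vartheta_{\nu_{-}}\cup\vartheta_{\nu_{+}}$, so no Cauchy estimate on small circles near the poles is needed or attempted.
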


\begin{proof}
  Let us establish the first  relation \eqref{26}.  Consider a contour $\vartheta(B).$ Having fixed $R>0,0<\kappa<1,$ so that $R(1-\kappa)=r,$ consider a monotonically increasing sequence $\{R_{\nu}\}_{0}^{\infty},\,R_{\nu}=R(1-\kappa)^{-\nu+1}.$   Using Lemma 5 \cite{firstab_lit:1kukushkin2021}, we get
$$
\|(I-\lambda B )^{-1}\|_{\mathfrak{H}}\leq e^{\gamma (|\lambda|)|\lambda|^{\sigma}}|\lambda|^{m},\,\sigma>s,\,m=[\sigma],\,|\lambda|=\tilde{R}_{\nu},\;R_{\nu}<\tilde{R}_{\nu}<R_{\nu+1},
$$
where
$$
\gamma(|\lambda|)= \beta ( |\lambda|^{m+1})  +C \beta(|C  \lambda| ^{m+1}),\;\beta(r )= r^{ -\frac{\sigma}{m+1} }\left(\int\limits_{0}^{r}\frac{n_{B^{m+1}}(t)dt}{t }+
r \int\limits_{r}^{\infty}\frac{n_{B^{m+1}}(t)dt}{t^{ 2  }}\right).
$$
Note that  in accordance with Lemma 3 \cite{firstab_lit:1Lidskii}  the following relation holds
\begin{equation}\label{27}
\sum\limits_{i=1}^{\infty}\lambda^{\frac{\sigma}{ (m+1)}}_{i}( \tilde{B} )\leq \sum\limits_{i=1}^{\infty}s^{ \,\sigma }_{i}( B )<\infty,\,\varepsilon>0,
\end{equation}
where   $\tilde{B}:=(B^{\ast m+1}A^{m+1})^{1/2}.$  It is clear that   $\tilde{B}\in  \mathfrak{S}_{\upsilon},\,\upsilon<\sigma/(m+1).$
Denote by $\vartheta_{\nu}$ a bound of the intersection of the ring $\tilde{R}_{\nu}<|\lambda|<\tilde{R}_{\nu+1}$ with the interior of the contour $\vartheta(B),$ denote by $N_{\nu}$ a number of the resolvent poles  being   contained  in the set $\mathrm{int }\,\vartheta(B) \,\cap \{\lambda:\,r<|\lambda|<\tilde{R}_{\nu} \}.$ In accordance with Lemma 3 \cite{firstab_lit(axi2022)},  we get
\begin{equation}\label{28}
 \frac{1}{2\pi i}\oint\limits_{\vartheta_{\nu}}e^{-\varphi^{\alpha}(\lambda)  t} B(I-\lambda B)^{-1}f d \lambda =\sum\limits_{q=N_{\nu}+1}^{N_{\nu+1}}\sum\limits_{\xi=1}^{m(q)}\sum\limits_{i=0}^{k(q_{\xi})}e_{q_{\xi}+i}c_{q_{\xi}+i}(t),\;f\in \mathfrak{H}.
\end{equation}
Let us estimate the above integral, for this purpose split the contour $\vartheta_{\nu}$ on  terms $\tilde{\vartheta}_{ \nu  }:=\{\lambda:\,|\lambda|=\tilde{R}_{\nu},\, \theta_{0} \leq\mathrm{arg} \lambda  \leq\theta_{1}  \},\,\tilde{\vartheta}_{ \nu+1  },\,\vartheta_{\nu_{-}}:=
\{\lambda:\,\tilde{R}_{\nu}<|\lambda|<\tilde{R}_{\nu+1},\, \mathrm{arg} \lambda  =\theta_{0}  \},\, \vartheta_{\nu_{+}}:=
\{\lambda:\,\tilde{R}_{\nu}<|\lambda|<\tilde{R}_{\nu+1},\, \mathrm{arg} \lambda  =\theta_{1}  \}.$   Applying  the Wieman theorem  (Theorem 30, \S 18, Chapter I \cite{firstab_lit:Eb. Levin}),  we can claim that there exists such a sequence $\{R'_{n}\}_{1}^{\infty},\,R'_{n}\uparrow \infty,\,\tilde{R}_{\nu}<R'_{\nu}<\tilde{R}_{\nu+1}$ that
\begin{equation}\label{16v}
\forall \varepsilon>0,\,\exists N(\varepsilon):\,e^{- C|\varphi(\lambda )|^{\alpha}t}\leq e^{- C m^{\alpha}_{\varphi}(R'_{\nu})t}\leq e^{- C t[M_{\varphi}(R'_{\nu})]^{(\cos \pi \varrho-\varepsilon)\alpha}},\,\lambda \in \tilde{\vartheta}_{ \nu  },\,\nu> N(\varepsilon),
\end{equation} where $\varrho$ is the order of the entire  function $\varphi.$   We should note that the assumption  $\tilde{R}_{\nu}<R'_{n}<\tilde{R}_{\nu+1}$ has been  made without loss of generality of the reasonings for in the context of the proof we do not care on the accurate arrangement of the contours but need to prove the   existence of an arbitrary one. This inconvenience is based upon the uncertainty in the way of chousing the contours in the Wieman theorem, at the same time  at any rate, we can extract a subsequence of    the sequence  $\{\tilde{R}_{n}\}_{1}^{\infty}$ in the way we need. Thus, using the given reasonings, Applying  Lemma 5 \cite{firstab_lit:1kukushkin2021}, relation \eqref{16v},      we get
\begin{equation*}
 J_{  \nu  }: =\left\|\,\int\limits_{\tilde{\vartheta}_{ \nu }}e^{-\varphi^{\alpha}(\lambda)  t} B(I-\lambda B)^{-1}f d \lambda\,\right\|_{\mathfrak{H}}\leq \,
 \int\limits_{\tilde{\vartheta}_{ \nu }}e^{-t\mathrm{Re}\,\varphi^{\alpha}(\lambda)  } \left\|B(I-\lambda B)^{-1}f \right\|_{\mathfrak{H}} |d \lambda|\leq
\end{equation*}
$$
 \leq e^{\gamma (|\lambda|)|\lambda|^{\sigma} }|\lambda|^{m+1}   Ce^{- C t[M_{\varphi}(R'_{\nu})]^{(\cos \pi \varrho-\varepsilon)\alpha}} \int\limits_{-\theta-\varsigma}^{\theta+\varsigma}  d \,\mathrm{arg} \lambda,\,|\lambda|=\tilde{R}_{\nu}.
$$
As a result, we get
$$
J_{ \nu } \leq    e^{\gamma (|\lambda|)|\lambda|^{\sigma} }|\lambda|^{m+1}   Ce^{- C t[M_{\varphi}(R'_{\nu})]^{(\cos \pi \varrho-\varepsilon)\alpha}} ,
   $$
   where
   $
   \,m=[\sigma],\,|\lambda|=\tilde{R}_{\nu}.
$
In accordance with  Lemma 2 \cite{firstab_lit:1kukushkin2021}, we have $\gamma (|\lambda|)\rightarrow 0,\,|\lambda|\rightarrow\infty.$   In accordance with the formula \eqref{1j} we can extract a subsequence from the sequence $\{\tilde{R'}_{n}\}_{1}^{\infty}$ and as a result from the sequence  $\{\tilde{R}_{n}\}_{1}^{\infty}$  so that  for a fixed $t$ and  a sufficiently large $\nu,$ we have  $ \gamma (|\tilde{R}_{\nu}|)|\tilde{R}_{\nu}|^{\sigma} - C t[M_{\varphi}(R'_{\nu})]^{(\cos \pi \varrho-\varepsilon)\alpha}     < 0.$ Here, we have not used a subsequence to not complicate the form of writing.
 Therefore, taking into account the above estimates, we can claim that the following series is convergent
 $$
 \sum\limits_{\nu=0}^{\infty}J_{\nu}<\infty.
$$
  Applying Lemma 6 \cite{firstab_lit:1kukushkin2021}, Lemma \ref{L2a}, we get
$$
 J^{+}_{\nu}: =\left\|\,\int\limits_{\vartheta_{\nu_{+}}}e^{-\varphi^{\alpha}(\lambda)  t} B(I-\lambda B)^{-1}f d \lambda\,\right\|_{\mathfrak{H}}\leq  C\|f\|_{\mathfrak{H}} \cdot C\int\limits_{R_{\nu}}^{R_{\nu+1}}  e^{-  C  t \mathrm{Re}\, \varphi^{\alpha}(\lambda) }   |d   \lambda|\leq
  $$
  $$
\leq C\!\!\!\int\limits_{R_{\nu}}^{R_{\nu+1}}  e^{-  C t |\varphi (\lambda)|^{\alpha} }   |d   \lambda| \leq C  e^{- Cte^{ \alpha H(\theta_{1})R_{\nu}^{\varrho(R_{\nu})}}  }     \int\limits_{R_{\nu}}^{R_{\nu+1}}   |d   \lambda|=
 C  e^{-   C te^{ \alpha H(\theta_{1})R_{\nu}^{\varrho(R_{\nu})}}  }    \{R_{\nu+1}-R_{\nu} \}.
 $$
$$
 J^{-}_{\nu}: =\left\|\,\int\limits_{\vartheta_{\nu_{-}}}e^{-\varphi^{\alpha}(\lambda)  t}B(I-\lambda B)^{-1}f d \lambda\,\right\|_{\mathfrak{H}}\leq   C  e^{-   Cte^{ \alpha H(\theta_{0})R_{\nu}^{\varrho(R_{\nu})}}  }       \int\limits_{R_{\nu}}^{R_{\nu+1}}   |d   \lambda|=
 C  e^{-   Cte^{\alpha  H(\theta_{0})R_{\nu}^{\varrho(R_{\nu})}}  }    \{R_{\nu+1}-R_{\nu} \}.
$$
The obtained results allow us to claim (the proof is omitted) that
$$
  \sum\limits_{\nu=0}^{\infty}J^{+}_{\nu}<\infty,\;\; \sum\limits_{\nu=0}^{\infty}J^{-}_{\nu}<\infty.
$$
Using the formula \eqref{28}, the given above decomposition of the contour $\vartheta_{\nu},$  we   obtain the first relation \eqref{26}. Let us establish the second relation   \eqref{26}, for this purpose, we should note that in accordance with  relation \eqref{28}, the properties of the contour integral, we have
 $$
 \frac{1}{2\pi i}\!\!\oint\limits_{\vartheta_{\tilde{R}_{n}}  (B)}\!\!\!\!e^{-\varphi^{\alpha}(\lambda)  t} B(I-\lambda B)^{-1}f \,d \lambda =
   \sum\limits_{\nu=0}^{n-1} \sum\limits_{q=N_{\nu}+1}^{N_{\nu+1}}\sum\limits_{\xi=1}^{m(q)}
  \sum\limits_{i=0}^{k(q_{\xi})}e_{q_{\xi}+i}c_{q_{\xi}+i}(t)
  ,\;f\in \mathfrak{H},\,n\in \mathbb{N},
$$
where
$
 \vartheta_{R}(B):= \mathrm{Fr}\left\{\mathrm{int }\,\vartheta(B) \,\cap \{\lambda:\,r<|\lambda|<R \}\right\}.
$
Using the proved above  fact $J_{ \nu }\rightarrow0,\,\nu\rightarrow\infty,$    we obtain easily
$$
\frac{1}{2\pi i}\!\!\oint\limits_{\vartheta_{\tilde{R}_{n}}  (B)}\!\!\!\!e^{-\varphi^{\alpha}(\lambda)  t} B(I-\lambda B)^{-1}f \,d \lambda\rightarrow \frac{1}{2\pi i}\oint\limits_{\vartheta  (B)} e^{-\varphi^{\alpha}(\lambda)  t} B(I-\lambda B)^{-1}f \,d \lambda,\;n\rightarrow\infty.
$$
 The latter relation
  gives us the following formula
\begin{equation}\label{19m}
 \frac{1}{2\pi i}\!\!\oint\limits_{\vartheta   (B)}\! e^{-\varphi^{\alpha}(\lambda)  t} B(I-\lambda B)^{-1}f \,d \lambda =
   \sum\limits_{\nu=0}^{\infty} \mathcal{A}_{\nu}(\varphi^{\alpha},t)f,\;f\in \mathfrak{H}.
\end{equation}
If $f\in \mathrm{D}(W),$ then applying Lemma \ref{L3a}, we obtain the second relation \eqref{26}.
\end{proof}

In accordance with the ordinary approach \cite{firstab_lit:2kukushkin2022},   we   consider a Hilbert space $\mathfrak{H}$ consists of   element-functions $u:\mathbb{R}_{+}\rightarrow \mathfrak{H},\,u:=u(t),\,t\geq0$    and we  assume that if $u$ belongs to $\mathfrak{H}$    then the fact  holds for all values of the variable $t.$   We understand such operations as differentiation and integration in the generalized sense what is based on the topological properties  of the Hilbert space $\mathfrak{H}.$ For instance, the derivative is understood as a  limit  in the sense of the norm e.t.c., more detailed information can be found in \cite{firstab_lit:1kukushkin2021}, \cite{firstab_lit:Krasnoselskii M.A.}.  Combining the operations we can consider a generalized  fractional derivative
  in the Riemann-Liouville sense (see \cite{firstab_lit:2kukushkin2022},\cite{firstab_lit:samko1987}),     in the formal form, we have
$$
   \mathfrak{D}^{1/\alpha}_{-}f(t):=-\frac{1}{\Gamma(1-1/\alpha)}\frac{d}{d t}\int\limits_{0}^{\infty}f(t+x)x^{-1/\alpha}dx,\;\alpha>1.
$$
Let us study   a Cauchy problem
\begin{equation}\label{23a}
     \mathfrak{D}^{1/\alpha}_{-}  u = \varphi(W) u  ,\;u(0)=f\in \mathrm{D}(W^{n} ),\,\forall n\in \mathbb{N}.
\end{equation}
\begin{teo}\label{T4}
 Assume that conditions of  Theorem \ref{T3} holds, then
there exists a solution of the Cauchy problem \eqref{23a} in the form
\begin{equation}\label{25k}
u(t)= \sum\limits_{\nu=0}^{\infty} \mathcal{A}_{\nu}(\varphi^{\alpha},t)f.
\end{equation}
Moreover, the existing solution is unique if the operator   $\mathfrak{D}^{1-1/\alpha}_{-}\!\varphi(W)$ is accretive.
 \end{teo}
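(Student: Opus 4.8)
The plan is to treat existence and uniqueness separately: existence is read off from the contour–integral representation \eqref{19m} supplied by Theorem \ref{T3}, while uniqueness follows from an energy estimate that activates the accretivity hypothesis.

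\textbf{Existence.} By Theorem \ref{T3} the series \eqref{25k} converges in $\mathfrak{H}$ for every fixed $t>0$, and since $f\in\mathrm{D}(W^{n})\subset\mathrm{D}(W)$ the second relation in \eqref{26} yields $u(t)\to f$ as $t\to+0$, so the initial condition of \eqref{23a} is satisfied. By \eqref{19m},
$$
u(t)=\frac{1}{2\pi i}\oint\limits_{\vartheta(B)}e^{-\varphi^{\alpha}(\lambda)t}\,B(I-\lambda B)^{-1}f\,d\lambda .
$$
The first step is to compute the action of $\mathfrak{D}^{1/\alpha}_{-}$ on the scalar kernel $t\mapsto e^{-\varphi^{\alpha}(\lambda)t}$. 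By $(\mathrm{H}\mathrm{I})$ the function $\varphi$ maps $\mathfrak{L}_{0}(\theta_{0},\theta_{1})$ into $\mathfrak{L}_{0}(\varpi)$ with $\varpi<\pi/2\alpha$, hence for $\lambda\in\vartheta(B)$ the number $\varphi^{\alpha}(\lambda)$ lies in the open right half-plane; the Euler integral $\int_{0}^{\infty}e^{-\varphi^{\alpha}(\lambda)x}x^{-1/\alpha}dx=\Gamma(1-1/\alpha)\,[\varphi^{\alpha}(\lambda)]^{1/\alpha-1}$ is then legitimate, and differentiating in $t$ gives $\mathfrak{D}^{1/\alpha}_{-}e^{-\varphi^{\alpha}(\lambda)t}=\varphi(\lambda)\,e^{-\varphi^{\alpha}(\lambda)t}$, the principal branch providing $[\varphi^{\alpha}(\lambda)]^{1/\alpha}=\varphi(\lambda)$.

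Next I would justify interchanging $\mathfrak{D}^{1/\alpha}_{-}$ with the contour integral. Its constituent operations — integration against $x^{-1/\alpha}$ over $(0,\infty)$ followed by a $t$-derivative — can be moved inside $\oint_{\vartheta(B)}$ once one has a $\lambda$-integrable majorant that is uniform for $t$ in compact subsets of $(0,\infty)$; this majorant is precisely the one already used in the proof of Theorem \ref{T3}: along the rays $\arg\lambda=\theta_{0},\theta_{1}$ the lower bound of Lemma \ref{L2a} gives $|e^{-\varphi^{\alpha}(\lambda)t}|\le e^{-C|\varphi(\lambda)|^{\alpha}t}$ with doubly-exponential decay in $|\lambda|$, on the arc one invokes the Wiman bound as before, and $\int_{0}^{\infty}e^{-C|\varphi(\lambda)|^{\alpha}x}x^{-1/\alpha}dx\le C|\varphi(\lambda)|^{-(\alpha-1)}$ absorbs the time-integration. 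Therefore
$$
\mathfrak{D}^{1/\alpha}_{-}u(t)=\frac{1}{2\pi i}\oint\limits_{\vartheta(B)}\varphi(\lambda)\,e^{-\varphi^{\alpha}(\lambda)t}\,B(I-\lambda B)^{-1}f\,d\lambda ,
$$
and since $f\in\mathrm{D}(W^{n})$ for all $n$, the first statement of Lemma \ref{L3a} lets us pull $\varphi(W)$ outside the integral, so $\mathfrak{D}^{1/\alpha}_{-}u=\varphi(W)u$; hence \eqref{25k} solves \eqref{23a}.

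\textbf{Uniqueness.} Let $u_{1},u_{2}$ be solutions of \eqref{23a} and set $v:=u_{1}-u_{2}$, so $\mathfrak{D}^{1/\alpha}_{-}v=\varphi(W)v$ and $v(0)=0$. Applying $\mathfrak{D}^{1-1/\alpha}_{-}$ and using the composition rule $\mathfrak{D}^{1-1/\alpha}_{-}\mathfrak{D}^{1/\alpha}_{-}=\mathfrak{D}^{1}_{-}=-\,d/dt$ for right-sided Riemann--Liouville operators (admissible because $v(0)=0$), we get $-v'(t)=\mathfrak{D}^{1-1/\alpha}_{-}\varphi(W)\,v$. Pairing with $v$ in $\mathfrak{H}$ and integrating over $t\in(0,\infty)$, the left-hand side telescopes and its real part equals $\tfrac12\|v(0)\|_{\mathfrak{H}}^{2}-\tfrac12\lim_{t\to\infty}\|v(t)\|_{\mathfrak{H}}^{2}\le0$, whereas the right-hand side has non-negative real part because $\mathfrak{D}^{1-1/\alpha}_{-}\varphi(W)$ is accretive; both sides therefore vanish and, together with $v(0)=0$, this yields $v\equiv0$ along the scheme of \cite{firstab_lit:2kukushkin2022}. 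The main obstacle is twofold: in the existence part, the dominated-convergence argument that legitimizes moving $\mathfrak{D}^{1/\alpha}_{-}$ through $\oint_{\vartheta(B)}$ uniformly in $t$ — which is exactly where the growth-regularity condition and Lemma \ref{L2a} are indispensable — and, in the uniqueness part, the rigorous treatment of the fractional calculus at the endpoints $t=0,\infty$ (validity of $\mathfrak{D}^{1-1/\alpha}_{-}\mathfrak{D}^{1/\alpha}_{-}=\mathfrak{D}^{1}_{-}$ and control of $\|v(t)\|_{\mathfrak{H}}$ as $t\to\infty$) within the adopted functional framework.
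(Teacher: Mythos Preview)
Your proposal is correct and follows essentially the same route as the paper: existence is obtained from the contour representation \eqref{19m} by computing the Euler integral for the scalar kernel, justifying the interchange of $\mathfrak{D}^{1/\alpha}_{-}$ with $\oint_{\vartheta(B)}$ via the decay estimates of Lemma \ref{L2a} (the paper separates this into the differentiation step \eqref{29} and the Fubini step \eqref{18}, then assembles \eqref{19}), and finishing with Lemma \ref{L3a} and the second relation of \eqref{26}. For uniqueness the paper merely cites Theorem 6 of \cite{firstab_lit:2kukushkin2022}, whose accretivity-based energy argument is exactly what you sketch; your caveat about the composition rule at the endpoints is well placed, since for right-sided operators the relevant condition is decay at $t=\infty$ rather than $v(0)=0$.
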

\begin{proof}
Let us show that $u(t)$ is a solution of the problem \eqref{23a},
   we need establish  the following relation
\begin{equation}\label{29}
\frac{d}{dt}\!\!\int\limits_{\vartheta(B)} \varphi(\lambda)^{ 1-\alpha }e^{-\varphi^{\alpha}(\lambda)t}B(I-\lambda B)^{-1}f\, d\lambda =
-\!\!\int\limits_{\vartheta(B)}\varphi(\lambda) e^{-\varphi(\lambda)^{\alpha}t}B(I-\lambda B)^{-1}f\, d\lambda,\,h\in \mathfrak{H}
\end{equation}
i.e. we can use a differentiation operation  under the integral. Using  simple  reasonings, we obtain the fact that
 that for an arbitrary
$$
\vartheta_{j}(B):=\left\{\lambda:\;|\lambda|=r>0,\, \theta_{0} \leq\mathrm{arg} \lambda \leq \theta_{1} \right\}\cup\left\{\lambda:\;r<|\lambda|<r_{j},\,\mathrm{arg} \lambda =\theta_{0}  ,\,\mathrm{arg} \lambda =\theta_{1} \right\},
$$
 there exists a limit
$
 (e^{-\varphi^{\alpha}(\lambda)  \Delta t} -1)e^{-\varphi^{\alpha}(\lambda)  t}/ \Delta t \,    \longrightarrow -\varphi^{\alpha}(\lambda) e^{-\varphi^{\alpha}(\lambda)  t}  ,\,\Delta t\rightarrow 0,
$
where convergence is uniform with respect to $ \lambda\in \vartheta_{j}(B).$     By virtue of the  decomposition on the Taylor series, we get
$$
\left |\frac{e^{-\varphi^{\alpha}(\lambda)  \Delta t} -1}{ \Delta t}e^{-\varphi^{\alpha}(\lambda)  t}\right | \leq |\varphi(\lambda)|^{\alpha} e^{ |\varphi(\lambda)  |^{\alpha}\Delta t}
e^{-\mathrm{Re}\,\varphi^{\alpha}(\lambda)  t}\leq |\varphi(\lambda)|^{\alpha} e^{(\Delta t-Ct)  |\varphi(\lambda)|^{\alpha}  }, \,\lambda\in \vartheta(B).
$$
Thus, applying the latter estimate,  Lemma 6 \cite{firstab_lit:1kukushkin2021}, for a sufficiently small value $\Delta t,$ we get
\begin{equation}\label{23m}
 \left\|\, \int\limits_{\vartheta(B)}\frac{e^{-\varphi^{\alpha}(\lambda)  \Delta t} -1}{ \Delta t}\varphi^{1-\alpha}(\lambda)e^{-\varphi^{\alpha}(\lambda)  t}B(I-\lambda B)^{-1}f d\lambda \right\|_{\mathfrak{H}}
  \leq
   C \|f\|_{\mathfrak{H}} \int\limits_{\vartheta(B)} e^{-C   t|\varphi(\lambda)|^{\alpha}}|\varphi(\lambda)|    |d\lambda|.
\end{equation}
Let us establish the convergence of the last integral. Applying Theorem \ref{T2a}, we get\\
$$
\int\limits_{\vartheta(B)} e^{-C   t|\varphi(\lambda)|^{\alpha}}|\varphi(\lambda)|    |d\lambda| \leq \int\limits_{\vartheta(B)}  e^{-t    e^{C|\lambda|^{\varrho(|\lambda|)}}  }    e^{ C|\lambda|^{\varrho }}    |d\lambda|.
$$
It is clear that the latter integral is convergent for an arbitrary positive value $t,$ what     guaranties that the improper integral at the left-hand side of \eqref{23m}
 is uniformly convergent with respect to $\Delta t.$ These facts give us an opportunity to claim that the relation \eqref{29} holds. Here, we should explain that this conclusion is based upon the generalization of the well-known theorem of the calculus. In its own turn it follows easily from the theorem on the connection with the simultaneous limit and the repeated limit. We left a complete  investigation of the matter to the reader having noted that the scheme of the reasonings is absolutely analogous in comparison with the ordinary calculus.

 Applying  the scheme of the proof corresponding to the  ordinary integral calculus,  using the contour $\vartheta_{j}(B),$    applying  Lemma  6  \cite{firstab_lit:1kukushkin2021} respectively, we can establish a  formula
\begin{equation}\label{18}
\int\limits_{0}^{\infty}x^{-\xi}dx\!\!\int\limits_{\vartheta(B)}e^{-\varphi^{\alpha}(\lambda)(t+x)} B(I-\lambda B)^{-1}f\, d\lambda=\!\!\int\limits_{\vartheta(B)}e^{-\varphi^{\alpha}(\lambda)t} B(I-\lambda B)^{-1}f d\lambda\int\limits_{0}^{\infty}x^{-\xi}e^{-\varphi^{\alpha}(\lambda)x}dx,
\end{equation}
where $\xi\in(0,1).$
 Applying  the obtained  formulas, taking into account a relation
$$
  \int\limits_{0}^{\infty}x^{-1/\alpha}e^{-\varphi^{\alpha}(\lambda)x}dx=  \Gamma(1-1/\alpha) \varphi^{1-\alpha}(\lambda),
$$
we get
\begin{equation}\label{19}
\mathfrak{D}^{1/\alpha}_{-}\!\!\! \int\limits_{\vartheta(B)}e^{-\varphi^{\alpha}(\lambda)t}     B(I-\lambda B)^{-1}f= \!\!\int\limits_{\vartheta(B)}e^{-\varphi^{\alpha}(\lambda)t}  \varphi(\lambda)  B(I-\lambda B)^{-1}f\, d\lambda.
\end{equation}
Applying Lemma \ref{L3a}, relation \eqref{19m}, we obtain the fact that $u$ is a solution of the equation \eqref{23a}.
 The fact that the initial condition holds,  in the   sense
$
u(t)   \xrightarrow[   ]{\mathfrak{H}}  f,\,t\rightarrow+0,
$
follows from the second relation \eqref{26} Theorem \ref{T3}.  The scheme of the proof corresponding to the uniqueness part is given in Theorem 6 \cite{firstab_lit:2kukushkin2022}.
  We complete the proof.
\end{proof}

\noindent{\bf Applications to  concrete operators and physical processes}\\

Note that  the made approach allows us to obtain a solution  for the evolution equation with the operator function in the second term where the operator-argument belongs to a sufficiently wide class of operators. In this regard, one can find a plenty of examples  in the paper  \cite{firstab_lit:2kukushkin2022} where such well-known operators as the Riesz potential,  the Riemann-Liouville fractional differential operator, the Kipriyanov operator,  the difference operator are considered, some interesting  examples that cannot be covered by the results presented in  \cite{firstab_lit:Shkalikov A.}  can be also  found in the paper  \cite{firstab_lit(arXiv non-self)kukushkin2018}.
 More general approach, implemented in the paper \cite{kukushkin2021a}, allows us to  produce an abstract example -- a transform of an operator belonging to the class of  m-accretive operators.   We should stress a significance of the last claim  since the  class    contains the   infinitesimal generator of a strongly continuous    semigroup of contractions. In its own turn  fractional differential operators of the real order can be expressed in terms of the  infinitesimal generator of the corresponding semigroup,  what makes the offered generalization relevant   \cite{kukushkin2021a}.
 Application of the obtained results  appeals to  electron-induced kinetics of ferroelectrics polarization
switching as the self-similar memory physical systems. The whole point is that  the mathematical
model of the fractal dynamic system includes an initial value problem for the fractional order
differential equation  considered in the paper \cite{L. Mor}, where    computational schemes for solving fractional differential problem
were constructed using Adams-Bashforth-Moulton type predictor-corrector methods.
The stochastic algorithm based on Monte-Carlo method was proposed to simulate the
domain nucleation process during restructuring domain structure in ferroelectrics. At the same time the results obtained in this paper allow us not only to solve the problem analytically but consider a whole class of   problems for evolution equations of fractional order. As for the mentioned concrete case \cite{L. Mor}, we just need   consider a suitable functional  Hilbert  space and apply Theorem \ref{T4} directly. For instance, it can be the  Lebesgue space of square-integrable functions.  Here, we should note that in the case corresponding to a functional Hilbert space we gain more freedom in constructing the theory, thus some modifications of the method can appear but it is an issue for further more detailed study what is not supposed  in the framework of this paper. However, the following concrete case may be of interest to the reader.

  Goldstein et al. proved in \cite{firstab_lit:Goldstein} several new result of this type replacing the
Laplacian by the Kolmogorov operator
$$
L=\Delta+\frac{\nabla\rho}{\rho}\cdot\nabla.
$$
Here $\rho$ is a probability density on $\mathbb{R}^{N}$ satisfying $\rho\in C^{1+\alpha}_{lok}(\mathbb{R}^{N})$
  for some
$\alpha\in (0, 1),\; \rho(x) > 0$ for all $x\in \mathbb{R}^{N}.$ A reasonable question can appear  - Is there possible connections between
the developed theory   and the operator $L$? Indeed, the mentioned operator gives us an opportunity to show briskly capacity of the spectral theory methods. First of all, let us note the following relation holds
$$
L=\frac{1}{\rho}W,
$$
where
$$
 Wf:=D^{j} \rho D_{j} f = \mathrm{div} \rho  \nabla f,
$$
thus the right direction of the issue investigation should be connected with the operator composition $ \rho^{-1} W$ since the operator $W$ is uniformly elliptic and satisfies hypophyses $\mathrm{H}1,\mathrm{H}2$ \cite{kukushkin2021a} thus the results \cite{firstab_lit(arXiv non-self)kukushkin2018}, \cite{kukushkin2021a}, \cite{firstab_lit:1kukushkin2021} can be applied to the operator. A couple of words on the difficulties appearing while we study the operator composition. From the first glance it looks pretty well, but it is not so for the inverse operator (one need prove that it is a resolvent)  is a composition of an unbounded operator and a resolvent of the operator $W,$  indeed  since $R_{W}W=I,$ then formally, we have
$$
L^{-1}f= R_{W}\rho f.
$$
At the same time I think that the general theory created in the papers can be adopted to some operator composition but it is  a tremendous work. Instead of that I suggest that we should find a suitable pair of Hilbert spaces that is also not so easy matter))). However, we will see! Bellow, we consider a space $\mathbb{R}^{N}$ endowed with the norm
 $$
 |x|=\sqrt{\sum\limits_{k=1}^{n}|x_{k}|^{2}},\,x=(x_{1},x_{2},...,x_{n} )\in \mathbb{R} ^{N}.
 $$
 Assume that there exists a constant $\lambda>2$ such that  the following condition holds
$$
\left\| \rho^{1/\lambda-1}\nabla\rho \right\|_{L_{\infty}(\mathbb{R}^{N})}<\infty,\;\rho^{1/\lambda}(x)=O(1+|x|).
$$
One can verify easily that this condition is not unnatural for it holds for a function $\rho(x)=(1+|x|)^{\lambda},\,x\in \mathbb{R}^{N},\,\lambda\geq 1.$ Let us define a space $\mathfrak{H}_{+}$ as a completion of the  set $C_{0}^{\infty}(\mathbb{R}^{N})$ with the norm
$$
\|f\|^{2}_{\mathfrak{H}_{+}}=\|\nabla f\|^{2}_{L_{2}(\mathbb{R}^{N})}+\|f\|^{2}_{L_{2}(\mathbb{R}^{N},\varphi^{-2})},\,\varphi(x)=(1+|x|),
$$
here one can easily see that it is generated by the corresponding inner product in the Hilbert  space $\mathfrak{H}_{+}.$ The following result can be obtained as a consequence of the Adams theorem (see Theorem 1 \cite{firstab_lit:1Adams}).
Note that
$$
 \varphi^{\lambda/2}\nabla f=\nabla(f\varphi^{\lambda/2})-f\nabla \varphi^{\lambda/2},\,f\in C_{0}^{\infty}(\mathbb{R}^{N}),
$$
therefore in accordance with the triangle inequality, we have
$$
\left(\;\int\limits_{\mathbb{R}^{N}}|\varphi^{\lambda/2}\nabla f|^{2}dx\right)^{1/2}\leq \left(\;\int\limits_{\mathbb{R}^{N}}|\nabla(f\varphi^{\lambda/2})|^{2}dx\right)^{1/2}
 +\left(\;\int\limits_{\mathbb{R}^{N}}|f\nabla \varphi^{\lambda/2}|^{2}dx\right)^{1/2}=
$$
$$
=\left(\;\int\limits_{\mathbb{R}^{N}}|\nabla(f\varphi^{\lambda/2})|^{2}dx\right)^{1/2}
 +\frac{\lambda}{2}\left(\;\int\limits_{\mathbb{R}^{N}}|f\varphi^{\lambda/2}|^{2}\varphi^{-2}dx\right)^{1/2},
$$
here we have  substituted the relation
$
\nabla\varphi^{\lambda/2}= \varphi^{\lambda-2}\lambda^{2}/4
$
that can be obtained by direct calculation. Therefore, we get
$$
\left(\;\int\limits_{\mathbb{R}^{N}}|\nabla f|^{2}\varphi^{\lambda }dx\right)^{1/2}\leq C\left(\;\int\limits_{\mathbb{R}^{N}}|\nabla(f\varphi^{\lambda/2})|^{2}dx+\int\limits_{\mathbb{R}^{N}}|f\varphi^{\lambda/2}|^{2}\varphi^{-2}dx\right)^{1/2}.
$$
The letter relation can be rewritten in terms of the norm
$$
\left(\;\int\limits_{\mathbb{R}^{N}}|\nabla (g\varphi^{-\lambda/2 })|^{2}\varphi^{\lambda }dx\right)^{1/2}\leq C\|g\|_{\mathfrak{H}_{+}},\,g=f \varphi^{ \lambda/2 },
$$
Therefore, in accordance with  the Theorem 1 \cite{firstab_lit:1Adams}, we conclude that if a  set is bounded in the sense of the norm  $\mathfrak{H}_{+}$ then it  is compact in the sense of the norm
$$
\left(\;\int\limits_{\mathbb{R}^{N}}  |g|^{2} \varphi^{-\lambda  }  dx\right)^{1/2}.
$$
It is clear that
$$
 \int\limits_{\mathbb{R}^{N}}  |g|^{2} \varphi^{-\lambda  }  dx\leq \int\limits_{\mathbb{R}^{N}}  |g|^{2} \varphi^{-2 }  dx,\,\lambda>2.
$$
Therefore, we have the following inequality
$
\|g\|_{\mathfrak{H}_{-}}\leq   \|g\|_{\mathfrak{H}_{+}},\,g\in \mathfrak{H}_{+},
$
where
$$
\|g\|_{\mathfrak{H}_{-}}:=\left(\;\int\limits_{\mathbb{R}^{N}}  |g|^{2} \varphi^{-\lambda  }  dx\right)^{1/2} .
$$
Thus, we have created a pair of Hilbert spaces $\mathfrak{H}_{-}$ and $\mathfrak{H}_{+}$ satisfying the condition of compact embedding. Let us see how can it help us in studying the operator $L.$ Consider an operator $L':=-L+ \rho^{-2/\lambda}I,$   we ought to remark here that we need involve additional summand to apply the methods \cite{kukushkin2021a}. The crucial point is related to how to estimate the second term of the operator $-L$ from bellow. Here, we should point out that some peculiar techniques of the theory of functions can be involved. However, along with this we can consider simplified case (since we have imposed additional conditions upon the function $\rho$) in order to show how the invented method works. The following reasonings are made under the assumption  that the functions  $f,g\in C_{0}^{\infty}(\mathbb{R}^{N}).$ Consider
$$
\left|\,\int\limits_{\mathbb{R}^{N}}\frac{\nabla\rho}{\rho}\cdot\nabla f\,\bar{g}dx\right|\leq \,\int\limits_{\mathbb{R}^{N}}\left|\frac{\nabla\rho}{\rho}\right| |\nabla f|\,|g|dx \leq  \left\| \rho^{1/\lambda-1}\nabla\rho \right\|_{L_{\infty}(\mathbb{R}^{N})}\int\limits_{\mathbb{R}^{N}}  |\nabla f|\,|g| \rho^{-1/\lambda}dx\leq
$$
$$
 \leq    \left\| \rho^{1/\lambda-1}\nabla\rho \right\|_{L_{\infty}(\mathbb{R}^{N})}
   \|\nabla f\|_{L_{2}(\mathbb{R}^{N})} \|g\|_{L_{2}(\mathbb{R}^{N},\rho^{-2/\lambda})}\leq
$$
$$
\leq    \left\| \rho^{1/\lambda-1}\nabla\rho \right\|_{L_{\infty}(\mathbb{R}^{N})}\frac{1}{2}\left\{\|\nabla f\|^{2}_{L_{2}(\mathbb{R}^{N})}+ \|g\|^{2}_{L_{2}(\mathbb{R}^{N},\rho^{-2/\lambda})}  \right\}.
$$
Therefore
$$
-\mathrm{Re}\left(\frac{\nabla\rho}{\rho}\cdot\nabla f ,f \right)_{L_{2}(\mathbb{R}^{N})}\geq -\left\| \rho^{1/\lambda-1}\nabla\rho \right\|_{L_{\infty}(\mathbb{R}^{N})}\frac{1}{2}\left\{\|\nabla f\|^{2}_{L_{2}(\mathbb{R}^{N})}+ \|f\|^{2}_{L_{2}(\mathbb{R}^{N},\rho^{-2/\lambda})}  \right\}
$$
and in the case $ \left\| \rho^{1/\lambda-1}\nabla\rho \right\|_{L_{\infty}(\mathbb{R}^{N})}<2,$ we get easily
$$
\mathrm{Re}\left(L' f ,f \right)_{L_{2}(\mathbb{R}^{N})}\geq C_{0}\|f\|^{2}_{\mathfrak{H}_{+}},\,C_{0}>0.
$$
Using the above estimates, we obtain
$$
\left|(L' f ,g  )_{L_{2}(\mathbb{R}^{N})}\right|\leq  \|\nabla f\| _{L_{2}(\mathbb{R}^{N})}\|\nabla g\| _{L_{2}(\mathbb{R}^{N})}+\left\| \rho^{1/\lambda-1}\nabla\rho \right\|_{L_{\infty}(\mathbb{R}^{N})}
   \|\nabla f\|_{L_{2}(\mathbb{R}^{N})} \|g\|_{L_{2}(\mathbb{R}^{N},\rho^{-2/\lambda})}
 $$
 $$
  +\|f\| _{L_{2}(\mathbb{R}^{N},\rho^{-2/\lambda})}\|g\| _{L_{2}(\mathbb{R}^{N},\rho^{-2/\lambda})}\leq C_{1}\|f\|_{\mathfrak{H}_{+}}\|g\|_{\mathfrak{H}_{+}},\,C_{1}>0.
$$
Thus, we have a fulfilment of the   hypothesis $\mathrm{H}2$ \cite{kukushkin2021a}. Taking into account a fact  that a  negative space  $L_{2}(\mathbb{R}^{N},\varphi^{-\lambda})$ is involved,
we are being compelled to involve a modification of the hypothesis  $\mathrm{H}1$ \cite{kukushkin2021a} expressed  as follows. There  exist pairs of Hilbert spaces   $ \mathfrak{H}_{-}\subset \mathfrak{H},\,\,\mathfrak{H}_{+}\subset \subset \mathfrak{H}_{-},$  where the latter symbol denotes a compact embedding of spaces. However, we can go further and  modify a norm $\mathfrak{H}_{+}$ adding a summand in this case the considered operator can be changed, we have
$$
\|f\|^{2}_{\mathfrak{H}_{+}}:=\|\nabla f\|^{2}_{L_{2}(\mathbb{R}^{N})}+\|f\|^{2}_{L_{2}(\mathbb{R}^{N},\psi)},\,\psi(x)=(1+|x|)^{-2}+1,\,L':=L+I.
$$
Implementing the same reasonings one can prove that in this case the hypophyses $\mathrm{H}2$ \cite{kukushkin2021a} is fulfilled, the modified analog of the hypothesis  $\mathrm{H}1$ \cite{kukushkin2021a} can be formulated as follows.\\

 \noindent ($ \mathrm{H}1^{\ast} $) There  exists a chain  of Hilbert spaces   $\mathfrak{H}_{+}\subset  \mathfrak{H} \subset \mathfrak{H}_{-},\,\mathfrak{H}_{-}\subset\subset \mathfrak{H} $ and a linear manifold $\mathfrak{M}$ that is  dense in  $\mathfrak{H}_{+}.$ The operator $L$ is defined on $\mathfrak{M}.$\\

However, we have $\mathfrak{H}_{+}\subset\subset \mathfrak{H}_{-}$ instead of required $\mathfrak{H}_{+}\subset\subset \mathfrak{H}.$  This inconvenience can stress a peculiarity of the chosen method, at the same time the central point of the theory - Theorem 1 \cite{kukushkin2021a}  can be reformulated under newly obtained conditions corresponding to both variants of the operator $L'.$      The further step is how to calculate order of the operator $L'.$   Here, we should point out that there exists the Fefferman concept that covers such a kind of problems. For instance, the Rozenblyum result is presented in the monograph
\cite[p.47]{firstab_lit:Rosenblum}, in accordance with which we can choose such an unbounded subset of $\mathbb{R}^{N}$ that the relation $\lambda_{j}(\mathrm{Re} L')\asymp j^{2/N}$ holds, where the symbol $\lambda_{j}$ denotes an eigenvalue.   Thus, we left this question to the reader for  a more detailed  study    and reasonably allow ourselves to  assume that the  condition $\mu(\mathrm{Re} L')=2/N$ holds, where the symbol $\mu$ denotes operator order (see \cite{kukushkin2021a}). Having obtained analog of Theorem 1 \cite{kukushkin2021a} and order  of the real component of the operator $L'$ we have a key to the theory created in the papers \cite{firstab_lit:1kukushkin2021},\cite{firstab_lit:2kukushkin2022},\cite{firstab_lit(axi2022)}. Thus we can consider a Cauchy problem for the evolution equation with the operator $L'$ in the second term as well as a function of the operator $L'$ in the second term what leads us to the integro-differential evolution equation - it corresponds to an operator function having   finite  principal and major parts of the corresponding Laurent series.\\

\section{Conclusions}

In this paper, we invented a method to study Cauchy problems for abstract fractional  evolution equations with the operator function in the  second term. The considered class corresponding to the operator-argument is rather wide and includes non-selfadjoint unbounded operators.
However, as  a main result we  represent  an approach  allowing us to principally broaden conditions imposed upon the second term of the  abstract fractional evolution equation. The application part of the paper appeals to the theory of fractional differential equations.  In particular,    the existence and uniqueness theorems  for fractional evolution  equations,  with the second term  being presented by  an operator function of a differential operator with a fractional derivative in  final terms, are covered by the invented abstract method. In connection with this, various types of  fractional integro-differential operators  can be considered, it becomes clear if we involve an operator function represented by the Laurent series with finite principal and regular parts.
       We hope that the general concept will have a more detailed study as well as  concrete applied problems will be solved by virtue of the invented theoretical approach.\\

\noindent{\bf Acknowledgment  }\\

I am sincerely grateful to my scientific colleagues for discussions on related problems, I express my gratitude to B.G. Vakulov,  Yu.E. Drobotov, T.M. Andreeva, Yildirim Ozdemir.

\end{document}